\pgfplotsset{compat=1.14}
\title{On the influence of edges in first-passage percolation on $\mathbb{Z}^d$}
\date{}
\author{Barbara Dembin}
\address{Barbara Dembin\hfill\break
    D-MATH, ETH Z\"urich,
    Switzerland.}
\email{barbara.dembin@math.ethz.ch}
\author{Dor Elboim}
\address{Dor Elboim\hfill\break
    School of Mathematics,
    Institute for Advanced Study,
    New Jersey, United States.}
\email{delboim@ias.edu}
\author{Ron Peled}
\address{Ron Peled\hfill\break School of Mathematical Sciences, Tel Aviv University, Tel Aviv, Israel.\hfill\break
School of Mathematics, Institute for Advanced Study and Department of Mathematics, Princeton University, New Jersey, United States.}
\email{peledron@tauex.tau.ac.il}
\newtheorem{thm}{Theorem}[section]
\newtheorem{lem}[thm]{Lemma}  
\newtheorem{prop}[thm]{Proposition}
\newtheorem{cor}[thm]{Corollary}
\newtheorem{claim}[thm]{Claim}
\newcommand{\ZZ}{\mathbb{Z}}
\numberwithin{equation}{section}
\begin{document}
\begin{abstract}
    We study first-passage percolation on $\ZZ^d$, $d\ge 2$, with independent weights whose common distribution is compactly supported in $(0,\infty)$ with a uniformly-positive density. Given $\epsilon>0$ and $v\in\ZZ^d$, which edges have probability at least $\epsilon$ to lie on the geodesic between the origin and $v$? It is expected that all such edges lie at distance at most some $r(\epsilon)$ from either the origin or $v$, but this remains open in dimensions $d\ge 3$. We establish the closely-related fact that the \emph{number} of such edges is at most some $C(\epsilon)$, uniformly in $v$. In addition, we prove a quantitative bound, allowing $\epsilon$ to tend to zero as $\|v\|$ tends to infinity, showing that there are at most $O\big(\epsilon^{-\frac{2d}{d-1}}(\log \|v\|)^C\big)$ such edges, uniformly in $\epsilon$ and~$v$. The latter result addresses a problem raised by Benjamini--Kalai--Schramm (2003).


    
    Our technique further yields a strengthened version of a lower bound on transversal fluctuations due to Licea--Newman--Piza (1996).
\end{abstract}

\maketitle
\section{Introduction}
First-passage percolation is a model for a random metric space, formed by a random perturbation of an underlying base space. Since its introduction by Hammersley--Welsh in 1965~\cite{HammersleyWelsh}, it has been studied extensively in the probability and statistical physics literature. We refer to \cite{Kesten:StFlour} for general background and to \cite{50years} for more recent results.


We study first-passage percolation on the hypercubic lattice $(\ZZ^d,E(\mathbb{Z}^d))$, $d\ge 2$, in an independent and identically distributed (IID) random environment. The model is specified by a \emph{weight distribution $G$}, which is a probability measure on the non-negative reals. It is defined by assigning each edge $e\in E(\mathbb{Z}^d)$ a random passage time $t_e$ with distribution $G$, independently between edges. Then, each finite path $p$ in $\mathbb Z ^d$ is assigned the random passage time
\begin{equation}\label{eq:time of a path}
    T(p):=\sum _{e\in p} t_e,
\end{equation}
yielding a random metric $T$ on $\ZZ^d$ by setting the \emph{passage time between $u,v\in \mathbb Z ^d$} to
\begin{equation}
    T(u,v):=\inf_{p} T(p),
\end{equation}
where the infimum ranges over all finite paths connecting $u$ and $v$. Any path achieving the infimum is termed a \emph{geodesic} between $u$ and $v$. A unique geodesic exists when $G$ is atomless and will be denoted $\gamma(u,v)$. The focus of first-passage percolation is the study of the large-scale properties of the random metric $T$ and its geodesics.

The passage time of the geodesic between given endpoints is naturally a function of the weights assigned to all edges. To what extent is this passage time \emph{influenced} by the weight assigned to a specific edge? This notion is formalized here by the \emph{probability} that the geodesic passes through that edge. 
It is clear that the influence of edges near the endpoints cannot be uniformly small, but it is not clear whether the influence diminishes uniformly for edges far from the endpoints. This issue was highlighted by Benjamini--Kalai--Schramm~\cite{BKS} in their seminal study of the variance of the passage time, where the following problem, later termed \emph{the BKS midpoint problem}, was posed: Consider the geodesic between $0$ and $v$. Does the probability that it passes at distance $1$ from $v/2$ tend to zero as $\|v\|\to \infty$? The following more general version may also be expected:
\begin{equation}\label{eq:BKS generalization}
\begin{split}
    &\text{show that for any $\epsilon >0$ there is $r(\epsilon)>0$ such that for each $v\in\mathbb{Z}^d\setminus\{0\}$}\\
    &\text{and all $e\in E(\mathbb Z ^d)$ with $d(e,\{0,v\}) > r(\epsilon) $ we have $\mathbb P(e\in\gamma(0,v))< \epsilon$}
\end{split}
\end{equation}
with $d(e,\{0,v\})$ denoting the distance of $e$ from the closer endpoint.

On the square lattice ($d=2$), The BKS midpoint problem was resolved positively by Damron--Hanson~\cite{damron2017bigeodesics} under the assumption that the limit shape boundary is differentiable and then resolved unconditionally by Ahlberg-Hoffman~\cite{ahlberghoffman} (in the more general version~\eqref{eq:BKS generalization}). Recently, assuming that the limit shape has more than $40$ extreme points, the authors \cite{dembin2022coalescence} provided a quantitative version of~\eqref{eq:BKS generalization}, showing that $\mathbb P(e\in\gamma(0,v))$ is smaller than a negative power of $d(e,\{0,v\})$. In all dimensions $d\ge 2$, an optimal, up to sub-power factors, quantitative version of~\eqref{eq:BKS generalization} was obtained by Alexander~\cite{alexander2020geodesics} under assumptions on the model which are still unverified (the proof relies on assumptions on the limit shape and on the passage time fluctuations). Unconditionally, the BKS midpoint problem, and its generalization~\eqref{eq:BKS generalization}), remain open when $d\ge 3$.

Our first main result shows that, in all dimensions, there can be at most a constant number of exceptional edges in~\eqref{eq:BKS generalization}. 
Let us state this precisely. We work with the following class of weight distributions: assume that for some $b>a>0$ and $\beta>0$,
\begin{equation}\label{eq:assumption 1}
\begin{split}
G\text{ is supported on the interval $[a,b]$ and is absolutely continuous}\\
\text{with a density $\rho $ satisfying $\rho (x)\ge \alpha $ for almost all $x\in [a,b]$. } 
\end{split}
\end{equation}
\begin{thm}\label{thm:constant}
    Let $d\ge 2$. Suppose that the weight distribution $G$ satisfies \eqref{eq:assumption 1}. Then, for any $\epsilon >0$ there is a constant $C_\epsilon >0$ depending on $d,G$ and $\epsilon $, such that for all $v\in \mathbb Z ^d$,
    \begin{equation}\label{eq:constant number of edges}
    \big|\big\{ e\in E(\mathbb Z ^d): \mathbb P (e\in \gamma (0,v)) \ge \epsilon \big\} \big| \le C_\epsilon .
\end{equation}
\end{thm}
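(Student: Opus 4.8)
The plan is to combine a first-moment reduction with a resampling argument that ties the influence of a far-away edge to the transversal fluctuations of the geodesic. Throughout, fix $\epsilon>0$ and $v$, write $I(e):=\mathbb{P}(e\in\gamma(0,v))$ and $\mathcal{H}:=\{e: I(e)\ge\epsilon\}$, and recall that under \eqref{eq:assumption 1} the weight distribution is atomless, so $\gamma(0,v)$ is a.s.\ unique and $|\gamma(0,v)|\le (b/a)\|v\|_1$. Since $\sum_{e\in\mathcal{H}}I(e)=\mathbb{E}\,|\gamma(0,v)\cap\mathcal{H}|$, we get $|\mathcal{H}|\le \epsilon^{-1}\,\mathbb{E}\,|\gamma(0,v)\cap\mathcal{H}|$, so it suffices to show that the geodesic meets only $O_{d,G,\epsilon}(1)$ high-influence edges in expectation, uniformly in $v$. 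The difficulty is that a priori $\mathbb{E}\,|\gamma(0,v)|$ grows linearly in $\|v\|$, so one must genuinely prove that the edges of $\mathcal{H}$ are sparse \emph{along} the geodesic, and in a way that does not amount to localizing them near the endpoints (which is exactly \eqref{eq:BKS generalization}, still open for $d\ge3$).

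I would next organize $\mathcal{H}$ by scale using the geodesic trees rooted at $0$ and at $v$. Writing $\mathcal{T}_0:=\bigcup_{w}\gamma(0,w)$, an edge $e$ lies on $\gamma(0,v)$ precisely when $v$ belongs to the component $D_e$ of $\mathcal{T}_0\setminus e$ not containing $0$; since the sets $D_e$ over the edges $e$ crossing a given sphere $S_\ell(0)$ partition the far region, at each scale $\ell$ at most $\epsilon^{-1}$ edges of $\mathcal{H}$ with $d(e,0)\asymp\ell$ can contribute, and the theorem reduces to: there are only $O_{d,G,\epsilon}(1)$ scales $\ell$ at which $\gamma(0,v)$ crosses $S_\ell(0)$ through a single prescribed edge with probability at least $\epsilon$. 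I would establish this by a multi-scale resampling argument. Fix such ``pinning'' scales $\ell_1<\dots<\ell_K$ with $\ell_{j+1}\ge 2\ell_j$ and the associated edges $e_j\in\mathcal{H}$. Using \eqref{eq:assumption 1}, resampling the weights in a neighbourhood of $e_j$ of radius depending only on $a,b$ pushes the geodesic off $e_j$ with conditional probability bounded below in terms of the slack $T_{\mathrm{avoid}\,e_j}(0,v)-T(0,v)$, and the density lower bound forces this slack to be non-negligible with positive probability on $\{e_j\in\gamma(0,v)\}$. Because the scales are dyadically separated, these surgeries act on disjoint regions and are essentially independent; running a second-moment (or entropy) estimate on the resulting family of modified geodesics, against an a priori control of the variability of $T(0,v)$, should force $K\le C(d,G,\epsilon)$. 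The symmetric argument run from $v$ handles the edges of $\mathcal{H}$ closer to $v$, and summing (with the $\le \epsilon^{-1}$-per-scale count) yields \eqref{eq:constant number of edges}. The same local input, reread as a lower bound on the probability that the geodesic avoids a fixed far edge, is what upgrades the Licea--Newman--Piza transversal-fluctuation bound.

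The step I expect to be the main obstacle is the last one: converting ``$\gamma(0,v)$ is pinned at $K$ well-separated scales'' into a quantitative contradiction uniformly in $v$. A crude variance comparison only yields $K=O(\|v\|)$, so one must instead exploit that pinning at a scale $\ell$ far from both endpoints is a genuinely $\epsilon$-costly, essentially scale-local event whose costs compound across scales even when the high-influence edges are not themselves near the endpoints; making the independence of the scale-local resamplings rigorous (geodesics can re-enter a resampled neighbourhood) and assembling the resulting bounds without losing more than a constant in $\epsilon$ is the delicate part. By contrast, compact support and the density lower bound enter only to make the local surgery both geometrically cheap (detouring around one edge costs $O(1)$) and probabilistically non-degenerate, so I would not expect those ingredients to cause trouble.
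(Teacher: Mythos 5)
There is a genuine gap, and it sits exactly where you flag it: the step that converts ``the geodesic is pinned at $K$ well-separated scales'' into $K\le C(d,G,\epsilon)$ is the entire content of the theorem, and your sketch does not supply it. A second-moment or variance comparison against the fluctuations of $T(0,v)$, as you note, only gives $K=O(\|v\|)$, and the claimed ``essential independence'' of the scale-local surgeries is not something that can be patched easily: a single resampling near $e_j$ can reroute the geodesic globally, so the costs of the $K$ surgeries do not compound multiplicatively in any obvious way. In addition, the preliminary per-scale counting claim is not correct as stated: for edges $e,e'$ on the same geodesic ray from $0$ the components $D_e,D_{e'}$ of $\mathcal{T}_0\setminus e$ are nested rather than disjoint, and a fixed realization of $\gamma(0,v)$ can contain order $\ell$ edges at Euclidean distance $\asymp\ell$ from the origin; the honest bound at scale $\ell$ is therefore $C\ell\epsilon^{-1}$, not $\epsilon^{-1}$, so the reduction to counting ``pinning scales'' already loses a factor of $\ell$ per scale.

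The paper's argument avoids scales and local surgery altogether. With $A=\{e:p_e\ge\epsilon\}$, it raises the weight of \emph{every} edge of $A$ simultaneously by roughly $|A|^{-1/2}$, so that the total $\ell^2$-size of the perturbation is $O(1)$ and a Mermin--Wagner/Cameron--Martin type estimate (Lemma~\ref{lem:MW}) shows the perturbed measure is comparable to the original. Since $\mathbb{E}|\gamma_0\cap A|\ge\epsilon|A|$, with probability at least $\epsilon^3$ the perturbed geodesic still uses $\gtrsim\epsilon|A|$ edges of $A$, forcing $T_0^+\ge T_0+c\sqrt{|A|}$. Comparing with the translated passage times $T_h$, $\|h\|\le c_0\sqrt{|A|}$, via the triangle inequality forces $|\gamma_h\cap A|\ge c|A|$ on that event; averaging over all such $h$ and using that a geodesic spends at most $C\sqrt{|A|}$ edges in a ball of radius $c_0\sqrt{|A|}$ yields $|A|^{d/2+1}\le C|A|^{3/2}$, hence $|A|\le C$ for $d\ge2$. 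This global-perturbation-plus-translation-averaging mechanism is the key idea missing from your proposal; without it (or a genuinely new way to make the multi-scale costs compound), the argument does not close.
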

Note that for small $\epsilon>0$ the left-hand side of~\eqref{eq:constant number of edges} also has a complementary lower bound $c_\epsilon>0$, as there must be edges at a constant distance from the endpoints of the geodesic which have a constant influence.


In the same paper~\cite[page 1975]{BKS}, Benjamini--Kalai--Schramm also posed the following\footnote{The phrasing in \cite[p. 1975]{BKS} differs slightly to fit the setup of Bernoulli weights used there.}:
\begin{equation}\label{eq:BKS question}
\begin{split}
    &\text{show that there exist $C,\delta >0$ such that for each $v\in\mathbb{Z}^d\setminus\{0\}$,}\\
    &\text{$\mathbb P(e\in\gamma(0,v))\le C\|v\|^{-\delta }$ for all but at most $C\|v\|/ \log \|v\|$ edges $e\in E(\mathbb{Z}^d)$}.
\end{split}
\end{equation}
Compared with~\eqref{eq:BKS generalization}, this problem is, on the one hand, more quantitative as the threshold influence depends on $\|v\|$, but is, on the other hand, less restrictive as exceptional edges are allowed. 
A positive answer to this problem would have simplified the proof of the main result of~\cite{BKS}. As such an answer was lacking, the authors of~\cite{BKS} resorted to an averaging trick to circumvent the difficulty. Problem~\eqref{eq:BKS question} and its variants, along with circumventing solutions, also arose in later adaptations of the BKS method~\cite{benaim2008exponential, van2012sublinearity, matic2012sublinear, alexander2013subgaussian, sodin2014positive, Damron2015,dembin2022variance}. Our second main result establishes~\eqref{eq:BKS question} in all dimensions $d\ge 2$. Moreover, a quantitative dependence is obtained, with the bound $\|v\|/\log \|v\|$ of~\eqref{eq:BKS question} improved to $\|v\|^{C\delta }$, which is tight up to the value of the constant $C$ (for small $\delta$). For definiteness, we let $\|v\|$ denote the $\ell ^2$-norm.


\begin{thm}\label{thm:1}
Let $d\ge 2$. Suppose that the weight distribution $G$ satisfies \eqref{eq:assumption 1}. Then, there exists $C>0$, depending only on $d$ and $G$, such that for all $\epsilon >0$ and $v\in \mathbb Z ^d$ with $\|v\|\ge2$, 
\begin{equation}\label{eq:main}
    \big|\big\{ e\in E(\mathbb Z ^d): \mathbb P (e\in \gamma (0,v)) \ge \epsilon \big\} \big| \le C\epsilon ^{-\frac{2d}{d-1}} (\log \|v\|)^{\frac{d(d+1)}{d-1}}.
\end{equation}
\end{thm}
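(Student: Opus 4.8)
The plan is to combine a second-moment/variance bound for passage times with a coalescence or "averaging over translates" argument, in the spirit of the BKS method but pushed to control the number of high-influence edges directly. Write $I(e):=\PP(e\in\gamma(0,v))$ for the influence of an edge. The starting point is the identity $\EE|\gamma(0,v)\cap S| = \sum_{e\in S} I(e)$ for any set of edges $S$, so controlling the number of edges with $I(e)\ge\epsilon$ is essentially controlling how much total influence can concentrate. The key mechanism is that if many edges had influence $\ge\epsilon$, then the geodesic $\gamma(0,v)$ would be forced to pass through a positive fraction of them simultaneously with substantial probability (here one needs that the relevant events are positively correlated, or an FKG-type/BK-type input, or a careful union-bound-free counting), which in turn would inflate either the length of the geodesic or, more usefully, create a large "entropy" that contradicts the known concentration of $T(0,v)$ around its mean. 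Concretely, I would use the Benjamini--Kalai--Schramm variance bound $\var T(0,v) \le C\|v\|/\log\|v\|$ (valid under \eqref{eq:assumption 1}), together with the observation that resampling the weights on a set of high-influence edges changes $T(0,v)$ by an amount comparable to the number of such edges that lie on the geodesic.

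The main quantitative step is an influence-sum estimate of the form $\sum_{e} I(e)^2 \le C (\log\|v\|)^{c}$, or more precisely a bound on $\sum_e I(e)^{2}$ in terms of $\var T(0,v)$ via an Efron--Stein / martingale decomposition: $\var T(0,v) = \sum_e \EE[(\Delta_e T)^2]$ where $\Delta_e T$ is the martingale increment from revealing $t_e$, and one shows $\EE[(\Delta_e T)^2] \ge c\, I(e)^{2}$ (roughly: conditioned on $e$ being pivotal-like for the geodesic, which has probability $\gtrsim I(e)$, changing $t_e$ by an order-one amount changes $T$ by an order-one amount, and this contributes $\gtrsim I(e)\cdot I(e)$ after a Cauchy--Schwarz/conditioning argument exploiting the uniformly positive density in \eqref{eq:assumption 1}). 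Given such an inequality, $\#\{e: I(e)\ge\epsilon\}\cdot\epsilon^2 \le \sum_e I(e)^2 \le C\var T(0,v) \le C\|v\|/\log\|v\|$ — but this only gives a bound growing in $\|v\|$, which is far too weak. So the real argument must be more subtle: one has to localize, running the above on many disjoint "scales" or "boxes" along the geodesic and summing, or bootstrapping. The exponent $\frac{2d}{d-1}$ and the power of $\log\|v\|$ strongly suggest a chaining/renormalization over $\sim\log\|v\|$ scales combined with the isoperimetric-type loss $\frac{d}{d-1}$ coming from surface-to-volume ratios in $\ZZ^d$ — i.e., at each scale one pays a factor related to the $(d-1)$-dimensional cross-section of a box versus its $d$-dimensional volume.

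I would therefore structure the proof as: (i) a local influence bound showing that within any box of side $L$, the number of edges of influence $\ge\epsilon$ that the geodesic can "use" is controlled by $\epsilon^{-2}$ times a variance-of-restricted-passage-time quantity of order $L^{(d-1)/d}$ or so (this is where \eqref{eq:assumption 1} and a modification/resampling argument enter, and where the isoperimetric exponent $\frac{d-1}{d}$ appears — reflecting that a geodesic crossing a box of volume $L^d$ has fluctuation budget tied to its $(d-1)$-dimensional faces); (ii) a multiscale decomposition of the segment from $0$ to $v$ into $O(\log\|v\|)$ dyadic scales, applying (i) at each scale; (iii) summing the contributions, where the geometric series in the scales produces the final polylogarithmic factor $(\log\|v\|)^{\frac{d(d+1)}{d-1}}$ and the per-scale $\epsilon^{-2\cdot\frac{d}{d-1}}$ losses multiply to $\epsilon^{-\frac{2d}{d-1}}$.

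The hard part will be step (i): establishing that high-influence edges genuinely cost variance at the right rate, i.e., turning "edge $e$ has influence $\ge\epsilon$" into a lower bound on the local passage-time fluctuation that is additive over many such edges. The naive Efron--Stein lower bound $\EE[(\Delta_e T)^2]\gtrsim I(e)^2$ is easy, but summing it only recovers the total variance, not a localized, scale-sensitive statement; the delicate point is to show that \emph{many} high-influence edges in a small region cannot all "share" the same fluctuation budget — this requires either a coalescence argument (two geodesics with nearby endpoints merge, so influence cannot be spread too thinly) or an iteration that feeds the improved bound back into itself. Controlling the dependence between the pivotality events of different edges, without an FKG inequality for geodesic-containment events (which need not hold), is the crux, and I expect it to be handled by a conditioning/exploration argument along the geodesic combined with the uniformly positive density to guarantee order-one weight rearrangements have order-one probability.
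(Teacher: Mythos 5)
Your plan hinges on step (i): a ``localized variance'' bound asserting that the passage time across a box of side $L$ fluctuates at order $L^{(d-1)/d}$ (or similar), from which you would extract, per scale, a bound $\epsilon^{-2}L^{(d-1)/d}$ on the number of high-influence edges. This is the crux and it is not established; in fact no unconditional upper bound on passage-time fluctuations better than $L/\log L$ (Benjamini--Kalai--Schramm / Damron--Hanson--Sosoe) is known in any dimension, and a bound of order $L^{(d-1)/d}$ would amount to proving $\chi\le\frac{d-1}{2d}<\frac12$, which is a major open problem. As you yourself note, the global version of your argument ($\#\{e:I(e)\ge\epsilon\}\,\epsilon^2\le\var T(0,v)\le C\|v\|/\log\|v\|$) is far too weak, and the proposed multiscale bootstrap does not repair this: Efron--Stein is an identity for the \emph{total} variance, and there is no mechanism in your outline that prevents all the high-influence edges from sharing the same global fluctuation budget. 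Your heuristic for the exponents (an isoperimetric surface-to-volume ratio $\frac{d}{d-1}$) also does not point to a workable mechanism.

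The paper's actual argument avoids variance upper bounds entirely. It uses a Mermin--Wagner-type perturbation (Lemma~\ref{lem:MW}): increase every weight by $\tau_e$, where $\tau$ is a unit vector in $\ell^2$ proportional to (a smoothed version of) the indicator of the high-influence set $A_\epsilon$. The probability cost of this perturbation is controlled by $\|\tau\|_2=1$, while the gain is that $T(0,v)$ increases by roughly $\sum_{e\in\gamma}\tau_e$, which is large precisely when the geodesic uses many edges of $A_\epsilon$. Comparing with the translated passage times $T(h,v+h)$ for all $h$ in a ball of radius $r\approx\sum_e\tau_e p_e$, and using only the trivial fact that a geodesic spends at most $O(r)$ edges in a ball of radius $r$, yields the key inequality $(\sum_e\tau_e p_e)^d\le C\sum_e\tau_e$ (Proposition~\ref{cor:1}); the exponents $\frac{2d}{d-1}$ and the polylog then fall out of this single inequality applied to the smoothed indicator of $A_\epsilon$. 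The $d$ on the left comes from the $d$-dimensional volume of the set of translations, not from an isoperimetric ratio. If you want to salvage your approach you would need to either prove the localized fluctuation bound of step (i), which is out of reach, or replace it by a perturbative argument of the above type.
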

Theorem~\ref{thm:1} provides a power-law dependence on $\epsilon$ in the bound of Theorem~\ref{thm:constant}, at the cost of adding a poly-logarithmic factor in $\|v\|$.



Problem~\eqref{eq:BKS question} was further highlighted in the book~\cite[p. 41 in arXiv version]{50years} where it is pointed out that the only known upper bound on the $\ell^2$ norm of the influences is the trivial
\begin{equation}
    \sum_{e\in E(\mathbb{Z}^d)}\mathbb P(e\in\gamma(0,v))^2\le C\|v\|,
\end{equation}
which follows from the fact that the expected number of edges in $\gamma(0,v)$ is of the order of $\|v\|$. One can use Theorem~\ref{thm:1} together with the latter fact in order to improve this trivial estimate to $\|v\|^{\frac{2}{d+1}}\log(\|v\|)^d$. This is included in the following corollary which bounds general $\ell ^p$-norms.

\begin{cor}\label{cor:main}
Let $d\ge 2$. Suppose that the weight distribution $G$ satisfies \eqref{eq:assumption 1}. Then, for any $\beta \ge1$, there exists $C_\beta >0$, depending only on $d,G$ and $\beta  $, such that for all $v\in \mathbb Z ^d$ with $\|v\|\ge2$, 
\begin{equation}\label{eq:L2 bound on influences}
    \sum _{e\in E(\mathbb Z^d)} \mathbb P ( e\in \gamma (0,v) ) ^\beta  \le C _\beta  \begin{cases}\|v\|^{(\frac{d-1}{d+1})(\frac{2d}{d-1}-\beta)}(\log \|v\| )^{(\beta -1)d} & 1\le \beta < \frac{2d}{d-1} \\ (\log \|v\| )^{\frac{d(d+1)}{d-1}+1} & \beta = \frac{2d}{d-1} \\ (\log \|v\| )^{\frac{d(d+1)}{d-1}} & \beta  >\frac{2d}{d-1}
    \end{cases}.
\end{equation}
\end{cor}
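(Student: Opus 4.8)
The corollary is a direct consequence of Theorem~\ref{thm:1} together with the crude but essential fact that the expected number of edges on the geodesic is $O(\|v\|)$, which follows because the geodesic has passage time at most $b$ times its length while $T(0,v) \le b\|v\|_1$, hence the geodesic uses at most $\frac{b}{a}\|v\|_1$ edges deterministically. I would first record this: writing $N_k := |\{e : \PP(e \in \gamma(0,v)) \ge 2^{-k}\}|$ for the dyadic level sets of the influence function, Theorem~\ref{thm:1} gives $N_k \le C 2^{\frac{2dk}{d-1}} (\log\|v\|)^{\frac{d(d+1)}{d-1}}$, while $\sum_e \PP(e\in\gamma(0,v)) = \EE|\gamma(0,v)| \le C\|v\|$ gives the complementary bound, and in particular $N_k \le C\|v\| 2^k$ for the small-influence levels.

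The plan is then a routine layer-cake / dyadic decomposition of the $\ell^\beta$ sum. Partitioning the edges according to which dyadic band $[2^{-k-1}, 2^{-k})$ their influence falls in, one has
\begin{equation}
  \sum_{e\in E(\mathbb Z^d)} \PP(e\in\gamma(0,v))^\beta \le \sum_{k\ge 0} 2^{-k\beta}\, \#\big\{e : \PP(e\in\gamma(0,v)) \ge 2^{-k-1}\big\} \le \sum_{k\ge 0} 2^{-k\beta} N_{k+1}.
\end{equation}
I would split this sum at the threshold $k^* $ where the two bounds on $N_k$ cross, namely where $2^{\frac{2dk}{d-1}}(\log\|v\|)^{\frac{d(d+1)}{d-1}} \asymp \|v\| 2^k$, i.e. $2^{k^*} \asymp \big(\|v\| (\log\|v\|)^{-\frac{d(d+1)}{d-1}}\big)^{\frac{d-1}{d+1}}$. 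For $k \le k^*$ use $N_{k+1} \le C\|v\| 2^{k+1}$, so that band of the sum is dominated (when $\beta > 1$) by its largest term at $k=k^*$, of order $\|v\| 2^{k^*(1-\beta)}$; substituting $k^*$ yields the stated exponent $(\frac{d-1}{d+1})(\frac{2d}{d-1}-\beta)$ on $\|v\|$ and the matching power of $\log\|v\|$. For $k > k^*$ use $N_{k+1} \le C 2^{\frac{2d(k+1)}{d-1}}(\log\|v\|)^{\frac{d(d+1)}{d-1}}$; this band is a geometric sum with ratio $2^{\frac{2d}{d-1}-\beta}$, which is summable precisely when $\beta > \frac{2d}{d-1}$ (giving the pure poly-log bound, dominated by the $k=k^*$ term absorbed into constants), marginal when $\beta = \frac{2d}{d-1}$ (giving an extra factor of the number of terms, i.e. $\log(2^{k^*}) = O(\log\|v\|)$, hence the $+1$ in the exponent), and when $1 \le \beta < \frac{2d}{d-1}$ would be growing — but there the sum is anyway truncated since $N_k$ can only be as large as the total edge count $O(\|v\|^d)$, or more cleanly one simply notes that in this regime the $k\le k^*$ band already dominates and one reverses the roles; I would present the three cases by comparing which of the two endpoint contributions at $k^*$ wins.

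There is no real obstacle here — the only care needed is bookkeeping: matching the exponents of $\|v\|$ and of $\log\|v\|$ precisely in each of the three regimes, and handling the boundary case $\beta = \frac{2d}{d-1}$ where the geometric sum degenerates to an arithmetic-length sum contributing the extra logarithmic factor. I would double-check the $\beta=1$ endpoint of the first case against the trivial bound $\sum_e \PP(e\in\gamma(0,v)) \le C\|v\|$: there the formula gives $\|v\|^{(\frac{d-1}{d+1})\cdot\frac{d+1}{d-1}} = \|v\|$, as it must. The case $\beta = 2$, $d$ general, recovers the claimed improvement $\|v\|^{2/(d+1)}(\log\|v\|)^{d}$ of the trivial $\ell^2$ estimate mentioned before the corollary.
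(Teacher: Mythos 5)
Your overall strategy --- a dyadic decomposition of the influences, combining Theorem~\ref{thm:1} with the first-moment bound $\sum_e \mathbb P(e\in\gamma(0,v))=\mathbb E|\gamma(0,v)|\le C\|v\|$, split at the crossover scale $2^{k^*}\asymp \|v\|^{\frac{d-1}{d+1}}(\log\|v\|)^{-d}$ --- is exactly the paper's proof (its $j_0$ is your $k^*$). However, you have attached the two bounds to the wrong sides of the crossover, and this breaks the computation. The Theorem~\ref{thm:1} bound $N_k\le C2^{\frac{2dk}{d-1}}(\log\|v\|)^{\frac{d(d+1)}{d-1}}$ grows faster in $k$ than the trivial bound $N_k\le C\|v\|2^k$ but starts from a poly-logarithmic value at $k=0$, so it is the \emph{smaller} of the two precisely for $k\le k^*$ and the larger for $k>k^*$. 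One must therefore use Theorem~\ref{thm:1} on the high-influence band $k\le k^*$ and the trivial bound on the tail $k>k^*$ --- the opposite of what you wrote.

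As written, both bands fail. On $k\le k^*$ with $N_{k+1}\le C\|v\|2^{k+1}$ the summand is $C\|v\|2^{k(1-\beta)}$, which for $\beta>1$ is \emph{decreasing} in $k$; that band is dominated by its $k=0$ term and contributes $\Theta(\|v\|)$, not $\|v\|2^{k^*(1-\beta)}$, which is far too large since the target exponent of $\|v\|$ is strictly less than $1$ for $\beta>1$. On $k>k^*$ with the Theorem~\ref{thm:1} bound, the series $\sum_{k>k^*}2^{k(\frac{2d}{d-1}-\beta)}$ genuinely diverges for $\beta\le\frac{2d}{d-1}$: the band is infinite, so ``the number of terms'' is not $k^*$, and truncating via the total edge count $O(\|v\|^d)$ pushes the cutoff to $2^k\asymp\|v\|^{\frac{d-1}{2}}$ and yields a strictly worse power of $\|v\|$. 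With the assignment swapped, everything you say becomes correct: on $k\le k^*$ the summand $2^{k(\frac{2d}{d-1}-\beta)}(\log\|v\|)^{\frac{d(d+1)}{d-1}}$ is increasing for $\beta<\frac{2d}{d-1}$ and dominated by $k=k^*$ (first case), constant for $\beta=\frac{2d}{d-1}$ (giving the extra factor $k^*\le\log_2\|v\|$), and summable for $\beta>\frac{2d}{d-1}$; on $k>k^*$ the summand $\|v\|2^{k(1-\beta)}$ is decreasing for $\beta>1$ and dominated by $k=k^*$, where the two bounds agree and give your stated value $\|v\|2^{k^*(1-\beta)}$. That corrected version is precisely the paper's computation.
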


\subsection{Relation with transversal fluctuations lower bound}
To gain a better understanding of the bound~\eqref{eq:main}, it is instructive to have the following picture in mind. It is expected that there exists a dimension-dependent exponent $\xi$, the so-called transversal fluctuation exponent, such that the geodesic $\gamma(0,v)$ typically deviates from the straight line segment connecting $0$ and $v$ by order $\|v\|^\xi$. Moreover, it is expected that if one considers a plane orthogonal to the line segment and intersecting it at distance $\ell$ from the endpoints then the (tight number of) intersection points of the geodesic with the plane lie at distance of order at most $\ell^\xi$ from the line segment, with all points within this range having probability of the same order $\ell^{-\xi(d-1)}$ to be on the geodesic\footnote{While expected, this is not proved - such an estimate would include, as a special case, a quantitative version of the BKS midpoint problem with optimal exponent.} (see Figure~\ref{fig:1}). 
\begin{figure}[htp]
    \centering
    \includegraphics[width=14.5cm]{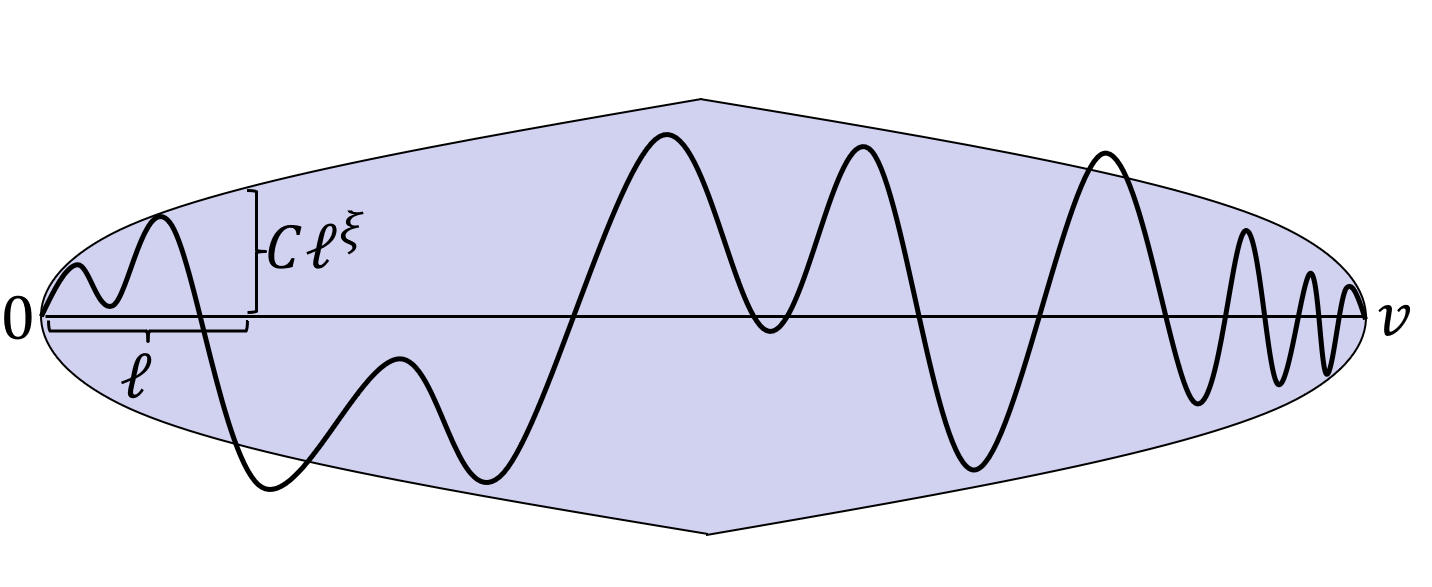}
    \caption{The left half of the colored region illustrates the points in $\mathbb{R}^d$ of the form $\ell\hat{v}+u$, with $\hat{v}=v/\|v\|$, with $\ell\le \frac{1}{2}\|v\|$ and with $u$ perpendicular to $v$ satisfying $\|u\|\le C\ell^\xi$ for some large constant $C$. Here $\xi$ stands for the (putative) transversal fluctuation exponent.  It is expected that edges in the left half incident to some $\ell\hat{v}+u$  have probability of order $\ell^{-\xi(d-1)}$ to be visited by the geodesic. The right half of the colored region has the symmetric properties. Most of the geodesic is expected to be contained in the colored region.}
    
    
    \label{fig:1}
\end{figure}
Consequently, one expects that
\begin{equation}\label{eq:L2 bound on influences for general xi}
    \big|\big\{ e\in E(\mathbb Z ^d): \mathbb P (e\in \gamma (0,v)) \ge \ell ^{-\xi (d-1)} \big\} \big|  \approx  \ell ^{1+\xi (d-1)}
\end{equation}
for all $v\in\mathbb{Z}^d$ and $1\le \ell\le \|v\|$. 
With this in mind, our upper bound~\eqref{eq:main} (up to the poly-logarithmic factor) may thus be regarded as providing the lower bound
\begin{equation}\label{eq:xi lower bound}
\xi\ge \frac{1}{d+1}
\end{equation}
on the transversal fluctuation exponent.

The lower bound~\eqref{eq:xi lower bound} is not expected to be optimal: it is predicted that $\xi=\frac{2}{3}$ for $d=2$ and that $\xi\ge \frac{1}{2}$ for all $d\ge 3$ (see the discussion in~\cite{licea1996superdiffusivity}). Still, a version of~\eqref{eq:xi lower bound} is the best currently known lower bound on transversal fluctuations for a point-to-point geodesic in a fixed direction, established in the work of Licea--Newman--Piza~\cite{licea1996superdiffusivity}. However, as we elaborate next, the version of~\eqref{eq:xi lower bound} proved in~\cite{licea1996superdiffusivity} is too weak to imply~\eqref{eq:main}.

The lower bound on transversal fluctuations stated in~\cite{licea1996superdiffusivity} is
\begin{equation}\label{eq:LiceaNewmanPiza}
    \xi^{(0)}\ge \frac{1}{d+1}
\end{equation}
where 
\begin{equation}
    \xi^{(0)}:=\sup\bigg\{  a\ge 0\colon \lim_{n\to\infty}\sup_{v\in\mathbb{Z}^d,\,\|v\|\ge n}\mathbb{P}\big( \gamma(0,v)\subset\mathrm{cyl}(0,v,\|v\|^a) \big) <1 \bigg\},
\end{equation}
and $\mathrm{cyl}(0,u,r)$ denotes the infinite cylinder of radius $r>0$ around the line connecting $0$ and $u\in\mathbb{R}^d$, defined as
\[\mathrm{cyl}(0,u,r):=\big\{ w\in\mathbb{R}^d: \exists \lambda\in \mathbb R  \,\text{ so that }\|w-\lambda u\| \le r  \big\}.\]
The definition implies that for each positive $a<\xi^{(0)}$, with uniformly positive probability over all but finitely many $v\in\mathbb{Z}^d$, the geodesic $\gamma(0,v)$ contains \emph{at least one point} outside of $\mathrm{cyl}(0,v,\|v\|^a)$. In comparison, the predicted estimate~\eqref{eq:L2 bound on influences for general xi} with $\ell =\|v\|$ yields a stronger conclusion, that for some $c>0$, the expected number of vertices of the geodesic $\gamma(0,v)$ outside of $\mathrm{cyl}(0,v,c\|v\|^{\xi})$ is \emph{at least linear in $\|v\|$}. This stronger conclusion, with $\frac{1}{d+1}$ replacing $\xi$ and a poly-logarithmic correction, is also implied by our bound~\eqref{eq:main}. As it turns out, a direct application of our methods (bypassing~\eqref{eq:main}) allows to remove the poly-logarithmic correction. This is stated precisely in the following theorem, which further provides control also for $\ell<\|v\|$. 
The result is thus a stronger version of the bound~\eqref{eq:LiceaNewmanPiza} of~\cite{licea1996superdiffusivity}.

\begin{thm}\label{thm:2}
Let $d\ge 2$. Suppose that the weight distribution $G$ satisfies \eqref{eq:assumption 1}. Then, there exist $c>0$ and $n_0\ge 1$, depending only on $d$ and $G$, such that for all $v\in \mathbb Z ^d$ with $\|v\|\ge n_0$ and all $(\log \|v\|)^{d+2}\le \ell \le \|v\|$,
\[\mathbb E\left(\big| \big\{ u\in \gamma(0,v) : 0\le (u,\hat{v}) \le \ell \big\} \setminus \mathrm{cyl} \big(0,v,c\ell ^{\frac{1}{d+1}}\big)\big|\right) \ge  \ell /2, \]
where $\hat {v}:=v/\|v\|$ and $(\cdot,\cdot)$ denotes the standard inner product in $\mathbb{R}^d$. In particular (substituting $\ell =\|v\|$), the geodesic $\gamma (0,v)$ contains in expectation at least $\|v\|/2$ vertices outside of $\mathrm{cyl} \big(0,v,c\|v\| ^{\frac{1}{d+1}}\big)$.
\end{thm}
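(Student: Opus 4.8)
\emph{Proof plan.} The first step would be to recognize the claimed inequality as an upper bound, in disguise, on the expected number of vertices of $\gamma(0,v)$ falling inside the thin cylinder. Put $r:=c\ell^{\frac{1}{d+1}}$, let $N_{\mathrm{in}}$ be the number of vertices $u\in\gamma(0,v)$ with $0\le(u,\hat v)\le\ell$ and $u\in\mathrm{cyl}(0,v,r)$, and let $N_{\mathrm{slab}}$ be the number of vertices $u\in\gamma(0,v)$ with $0\le(u,\hat v)\le\ell$; the quantity in the statement is $\mathbb{E}[N_{\mathrm{slab}}-N_{\mathrm{in}}]$. Along the geodesic the coordinate $(u,\hat v)$ changes by at most $1$ per step and must increase from $0$ to a value exceeding $\ell$, so deterministically $N_{\mathrm{slab}}\ge\ell$ (up to a harmless additive constant, which the hypothesis $\ell\ge(\log\|v\|)^{d+2}$ easily absorbs). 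It therefore suffices to show $\mathbb{E}[N_{\mathrm{in}}]\le\ell/2$. Since a vertex lies on $\gamma(0,v)$ only if one of its at most $2d$ incident edges does, it is in turn enough to bound $\sum_e\mathbb{P}(e\in\gamma(0,v))$ over the edges $e$ contained in a slightly enlarged copy of this cylinder segment --- equivalently, to bound the expected number of edges of $\gamma(0,v)$ lying inside a cylinder $T$ of radius $r$ and length $\ell$ with one base at the origin.

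To control $\mathbb{E}\big[\#\{e\in\gamma(0,v):e\subset T\}\big]$ the plan is to re-run the argument behind Theorems~\ref{thm:constant} and~\ref{thm:1}, but with all of its geometry confined to the single tube $T$ anchored at the endpoint $0$. In this localized form the estimate should come out free of the multiplicative logarithmic factor of~\eqref{eq:main}: the number of edges $e\subset T$ with $\mathbb{P}(e\in\gamma(0,v))\ge\epsilon$ is at most $C\epsilon^{-\frac{2d}{d-1}}$ for every $\epsilon\gtrsim\ell^{-\frac{d-1}{d+1}}$, with $C$ depending only on $d$ and $G$, up to an additive residual term $E_{\log}$ which is a fixed power of $\log\|v\|$. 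Feeding this into a dyadic decomposition over the influence scale, together with the trivial bound $|T|\asymp\ell\,r^{d-1}\asymp c^{d-1}\ell^{\frac{2d}{d+1}}$ on the number of candidate edges (the crossover scale $\epsilon_\ast\asymp\ell^{-\frac{d-1}{d+1}}$ being exactly where the count bound stops being used), yields
\[\mathbb{E}\big[\#\{e\in\gamma(0,v):e\subset T\}\big]\ \le\ C'\,|T|^{\frac{d+1}{2d}}+E_{\log}\ \le\ C''\,c^{\frac{(d-1)(d+1)}{2d}}\,\ell+E_{\log}.\]
(The direct argument should in fact deliver the even sharper principal term $C\,r^{d+1}=C\,c^{d+1}\ell$.)

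It then remains to choose the constants: fix $c>0$, depending only on $d$ and $G$, small enough that the principal term above is at most $\ell/4$, and observe that the residual term $E_{\log}$ is at most $\ell/4$ once $\ell\ge(\log\|v\|)^{d+2}$ and $\|v\|\ge n_0$ --- pinning down the precise power of $\log\|v\|$ in $E_{\log}$ is exactly what calibrates these two hypotheses. Combining, $\mathbb{E}[N_{\mathrm{in}}]\le\ell/2$, hence $\mathbb{E}[N_{\mathrm{slab}}-N_{\mathrm{in}}]\ge\ell/2$, which is the theorem; the final assertion is the special case $\ell=\|v\|$.

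The main obstacle is the second step: establishing the log-free, tube-localized analogue of Theorem~\ref{thm:1}, which is precisely the content of ``bypassing~\eqref{eq:main}''. Concretely, one must go through the proof of Theorem~\ref{thm:1}, replace the full geodesic $\gamma(0,v)$ by its portion inside the tube $T$ near the origin, and track the provenance of every factor of $\log\|v\|$, checking that each one either disappears once one no longer insists on a bound uniform over all of $E(\mathbb{Z}^d)$ and over all influence scales, or can be relegated to the additive term $E_{\log}$ which the assumption $\ell\ge(\log\|v\|)^{d+2}$ renders negligible relative to $\ell$. By contrast, the reductions of the first step and the dyadic summation in the second are routine.
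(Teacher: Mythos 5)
The overall architecture of your plan is sound and matches the paper's: reduce to showing that the expected number of geodesic edges inside the tube of radius $r=c\ell^{\frac{1}{d+1}}$ and length $\ell$ is at most $\ell/2$, using the deterministic fact that the slab $\{0\le(u,\hat v)\le\ell\}$ contains at least $\ell$ geodesic vertices. But the step you yourself flag as ``the main obstacle'' --- a log-free, tube-localized count of $\epsilon$-influential edges with only an \emph{additive} polylogarithmic residual --- is never established, and the route you sketch for it is doubtful. In the proof of Theorem~\ref{thm:1}, the factor $(\log n)^{\frac{d(d+1)}{2}}$ arises \emph{multiplicatively} from the bound $\sum_e q_e\le C(\log n)^d|A_\epsilon|$, i.e.\ from smoothing the indicator of a set $A_\epsilon$ that may be scattered, so that each of its edges contributes a full ball of radius $\log n$ to $\sum_e q_e$. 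Restricting attention to $A_\epsilon\cap T$ does not change this: the set is still potentially sparse inside $T$, the same smoothing cost reappears, and there is no evident way to ``relegate'' it to an additive term controlled by $\ell\ge(\log\|v\|)^{d+2}$. So the intermediate claim your dyadic summation relies on is unproved and, as stated, probably not obtainable by merely re-running the Theorem~\ref{thm:1} argument inside $T$.

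The paper's proof shows that no localized count bound and no dyadic decomposition over influence scales are needed. One applies Proposition~\ref{cor:1} a single time with $q$ equal to the smoothed indicator of the \emph{entire} tube $E(\ell,r)$ (as in \eqref{eq:defq2}), giving $\big(\sum_{e\in E(\ell,r)}p_e\big)^d\le C\big(\sum_e q_e\big)^{\frac{d+1}{2}}$. The log factors vanish here for a geometric reason: since both $\ell$ and $r$ exceed $\log n$ (this is exactly what the hypothesis $\ell\ge(\log\|v\|)^{d+2}$ guarantees for $r=c\ell^{\frac{1}{d+1}}$), the smoothing at scale $\log n$ only adds a negligible boundary layer, so $\sum_e q_e\le Cr^{d-1}\ell$ with no logarithmic loss. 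This yields $\mathbb{E}|\gamma(0,v)\cap E(\ell,r)|\le Cr^{\frac{d^2-1}{2d}}\ell^{\frac{d+1}{2d}}=Cc^{\frac{d^2-1}{2d}}\ell$, and taking $c$ small finishes the argument --- note in particular that the final bound has no additive residual $E_{\log}$ at all. Your plan would be repaired by replacing your second step with this single application of Proposition~\ref{cor:1} to the tube itself.
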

Licea--Newman--Piza~\cite{licea1996superdiffusivity} use martingale methods to prove their results (following Newman--Piza~\cite{newman1995divergence} and Aizenman and Wehr~\cite{aizenman1990rounding,wehr1990fluctuations}) while our proofs rely on perturbing the weights using Lemma~\ref{lem:MW} below. The approaches are different but share a common essence, related to the inequality $\chi\ge\frac{1}{2}(1-(d-1)\xi)$ derived by Wehr--Aizenman~\cite{wehr1990fluctuations} ($\chi$ is the fluctuation exponent of the passage time). After inspecting the proof of the bound~\eqref{eq:LiceaNewmanPiza} in~\cite{licea1996superdiffusivity} we think it may adapt to also yield a version of Theorem~\ref{thm:2} (at least for $\ell=\|v\|$ and possibly under weaker assumptions). However, it is not clear to us whether the method there also adapts to yield versions of Theorem~\ref{thm:constant} and Theorem~\ref{thm:1}.

\subsection{Remarks and open questions}

\begin{enumerate}
\item If the poly-logarithmic factor in the bound of Theorem~\ref{thm:1} was removed then both Theorem~\ref{thm:constant} and Theorem~\ref{thm:2} would follow as corollaries, without the need for separate arguments. However, we did not see a way to do so with our method. 

    \item Our proofs continue to apply under somewhat weaker assumptions than~\eqref{eq:assumption 1}. First, the same proof applies when the assumption that the density $\rho$ is bounded from below is replaced by the assumption that the distribution $G$ is the image of the standard Gaussian distribution under an increasing Lipschitz function from $\mathbb{R}$ to $[a,b]$. Second, with minor modifications to the proof, the left boundary of the support can be chosen to be $a=0$. We also tend to think, but have not established, that a variant of the proof will hold for general absolutely-continuous distributions with sufficiently light tail but this will require an improvement to our Lemma~\ref{lem:MW}.

    \item Of course, it would be significant to resolve the BKS midpoint problem in dimensions $d\ge 3$ or to improve the lower bound~\eqref{eq:xi lower bound} on the transversal fluctuation exponent.
\end{enumerate}

\section{A Mermin--Wagner type estimate}

The following lemma is the main technical tool required for the proof of the main theorems. The lemma is a Mermin--Wagner type estimate and is taken from \cite[Lemma~2.12, Remark~2.15 and Remark~2.16]{dembin2022coalescence}. Let us note that a distribution $G$ satisfying \eqref{eq:assumption 1} is the image of the standard Gaussian distribution under an increasing Lipschitz function and therefore \cite[Remark~2.16]{dembin2022coalescence} holds for such a distribution.

\begin{lem}\label{lem:MW}
  Suppose that $G$ satisfies \eqref{eq:assumption 1}. Then, there exist $C_0>0$ and
  \begin{itemize}
      \item Borel subsets $(B_{\delta })_{\delta>0}$ of $[a,b]$ with $\lim_{\delta\downarrow 0}G(B_\delta)=1$,
      \item For each $\tau \in [0,1]$, an increasing bijection $g^+_{\tau }:[a,b]\to [a,b]$.
  \end{itemize}
   such that the following holds:
  \begin{enumerate}
      \item For any $\tau \in[0,1]$, $w\le g^+_{\tau }(w)\le w+C_0\tau $  for $w\in [a,b] $ and for all $\delta>0$,
      \begin{equation}\label{eq:inclusion implies shift}
          g^+_{\tau }(w)\ge w+\delta \tau \quad\text{for $w\in B_\delta$}.
      \end{equation}
      \item For any $p>1$, an integer $n\ge 1$, a vector $\tau=(\tau_1,\ldots,\tau_n)\in[0,1]^n$ and a Borel set $A\subset\mathbb{R}^n$ we have
      \begin{equation}\label{eq:MW probability estimate}
         \begin{split}
             \mathbb P \Big( \big( g^+_{\tau _1}(X_1),\dots ,g^+_{\tau _n}(X_n) \big) \in A  \Big)\ge \exp \Big(-\frac{p\|\tau \|^2}{2(p-1)} \Big) \cdot \mathbb P \big( (X_1,\dots ,X_n) \in A \big) ^p,
         \end{split} 
      \end{equation}
      where $X_1,X_2,\dots ,X_n$ are i.i.d.\ random variables with distribution $G $. 
  \end{enumerate}
\end{lem}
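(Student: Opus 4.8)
The plan is to use the fact, noted just before the statement, that a weight distribution $G$ obeying~\eqref{eq:assumption 1} arises as a Gaussian quantile coupling. Let $F$ be the distribution function of $G$ and $\Phi$ that of the standard normal law, with density $\varphi$, and set $\phi:=F^{-1}\circ\Phi$. Then $\phi$ is a strictly increasing continuous bijection from $\mathbb R$ onto $(a,b)$ with $\phi(Z)\sim G$ whenever $Z\sim N(0,1)$; the hypothesis $\rho\ge\alpha$ makes $F^{-1}$ a $\tfrac1\alpha$-Lipschitz map, and since $\Phi$ is $\tfrac1{\sqrt{2\pi}}$-Lipschitz, $\phi$ is Lipschitz with constant $C_0:=\tfrac1{\alpha\sqrt{2\pi}}$. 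I would then \emph{define} the maps $g^+_\tau$ ($\tau\in[0,1]$) to act as a unit-speed translation in the Gaussian coordinate: $g^+_\tau:=\phi\circ(\,\cdot+\tau)\circ\phi^{-1}$ on $(a,b)$, extended by $g^+_\tau(a):=a$ and $g^+_\tau(b):=b$. Each $g^+_\tau$ is then an increasing bijection of $[a,b]$, with $g^+_0=\mathrm{id}$.

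For part~(1), the inequalities $w\le g^+_\tau(w)\le w+C_0\tau$ are immediate from $g^+_\tau(w)-w=\phi(\phi^{-1}(w)+\tau)-\phi(\phi^{-1}(w))$ together with the monotonicity and the Lipschitz bound on $\phi$. I would take $B_\delta$ to be precisely the set of $w\in[a,b]$ for which $g^+_\tau(w)\ge w+\delta\tau$ holds for \emph{every} $\tau\in[0,1]$ — this is Borel, since the condition is continuous in $\tau$ and may be verified along rationals, and it makes~\eqref{eq:inclusion implies shift} true by construction. What remains is $G(B_\delta)\to1$ as $\delta\downarrow0$. As the $B_\delta$ increase as $\delta$ decreases, by continuity from below it suffices to show $G\big(\bigcup_{\delta>0}B_\delta\big)=1$, i.e.\ that for a.e.\ standard normal $Z$ one has $\inf_{\tau\in(0,1]}\tau^{-1}\big(\phi(Z+\tau)-\phi(Z)\big)>0$. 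Since $\tau\mapsto\phi(Z+\tau)-\phi(Z)$ is continuous and strictly positive on $(0,1]$, this infimum fails to be positive only when $\liminf_{\tau\downarrow0}\tau^{-1}(\phi(Z+\tau)-\phi(Z))=0$; but $\phi$ is Lipschitz, hence differentiable a.e.\ with $\phi'(z)=\varphi(z)/\rho(\phi(z))$, which is positive for a.e.\ $z$ because $\varphi>0$ everywhere and $\{\rho=\infty\}$, being Lebesgue-null and therefore $G$-null, pulls back through $\phi$ to a null set of $z$'s. Thus the exceptional set is Lebesgue-null, which is what is needed.

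The heart of the lemma is part~(2), where the Gaussian structure does the work. Write $X_i=\phi(Z_i)$ with $Z=(Z_1,\dots,Z_n)$ a standard Gaussian vector and $\Psi(z):=(\phi(z_1),\dots,\phi(z_n))$. Since $\phi^{-1}\circ\phi=\mathrm{id}$, the event $\{(g^+_{\tau_1}(X_1),\dots,g^+_{\tau_n}(X_n))\in A\}$ equals $\{Z+\tau\in A'\}$ with $A':=\Psi^{-1}(A)$ (a Borel set, as $\Psi$ is continuous), while $\{(X_1,\dots,X_n)\in A\}=\{Z\in A'\}$. The law of $Z+\tau$ is absolutely continuous with respect to that of $Z$ with Cameron--Martin density $L(z)=\exp\big(\langle z,\tau\rangle-\tfrac12\|\tau\|^2\big)$. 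Applying H\"older's inequality with exponents $p$ and $\tfrac{p}{p-1}$ to the factorisation $\mathbf 1_{A'}=(\mathbf 1_{A'}L)^{1/p}\big(\mathbf 1_{A'}L^{-1/(p-1)}\big)^{(p-1)/p}$ gives
\[
\mathbb P(Z\in A')\le \mathbb P(Z+\tau\in A')^{1/p}\,\big(\mathbb E\,L^{-1/(p-1)}\big)^{(p-1)/p},
\]
and a routine Gaussian moment computation yields $\mathbb E\,L^{-1/(p-1)}=\exp\big(\tfrac{p\|\tau\|^2}{2(p-1)^2}\big)$, hence $\big(\mathbb E\,L^{-1/(p-1)}\big)^{(p-1)/p}=\exp\big(\tfrac{\|\tau\|^2}{2(p-1)}\big)$. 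Raising to the power $p$ and rearranging gives exactly~\eqref{eq:MW probability estimate}.

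The genuinely delicate point of the whole argument is the one in part~(1): one must arrange the definition of $B_\delta$ so that~\eqref{eq:inclusion implies shift} holds automatically while still retaining enough control to prove $G(B_\delta)\to1$, which forces the almost-everywhere positivity of $\phi'$ to be handled with care. Once the translation coupling is in place, part~(2) is essentially forced — and it is the choice of a \emph{unit} translation speed in the Gaussian coordinate that produces the clean exponent $\tfrac{p}{2(p-1)}$; any other speed $c$ would give $\tfrac{pc^2}{2(p-1)}$ instead, at the cost of only a larger constant $C_0$.
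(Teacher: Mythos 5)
Your proof is correct, and it is essentially the argument the paper relies on: the paper does not prove Lemma~\ref{lem:MW} itself but imports it from \cite[Lemma~2.12, Remarks~2.15--2.16]{dembin2022coalescence} via exactly the observation you start from, namely that $G$ satisfying \eqref{eq:assumption 1} is the image of the standard Gaussian under an increasing Lipschitz map, after which the unit-speed Gaussian translation together with the Cameron--Martin density and H\"older's inequality gives \eqref{eq:MW probability estimate} as you compute. The only point I would tighten is the a.e.\ positivity of $\phi'$: rather than reasoning about $\{\rho=\infty\}$, note that $F^{-1}$ is Lipschitz and increasing, so the set where $(F^{-1})'$ vanishes or fails to exist is mapped by $F^{-1}$ to a Lebesgue-null (hence $G$-null, hence Lebesgue-null preimage under $\Phi$) set — but this is a presentational matter, not a gap.
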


\section{Proof of Theorem~\ref{thm:constant}}\label{sec:constant}

Fix $\epsilon \le 0.01$. Throughout this section, the constants $C$ and $c$ may depend on $G,d$ and $\epsilon $ and are regarded as generic constants in the sense that their value may change from one appearance to the next. However, constants labeled with a fixed number, such as $C_0$, $c_0$, have a fixed value throughout the paper.
Let $v\in \mathbb Z ^d$ so that $n:=\|v\|$ is sufficiently large and  let $p_e:=\mathbb P (e\in \gamma (0,v))$. Our goal will be to show that the set $A=A_\epsilon :=\{e\in E(\mathbb Z ^d): p_e\ge \epsilon \}$ has constant size. Next, let $h\in \mathbb Z ^d$ and let 
\begin{equation}\label{eq:geodesics}
\gamma _0:=\gamma (0,v), \quad \gamma _h:=\gamma (h,v+h),\quad  T_0:=T(0,v), \quad T_h:=T(h,v+h).
\end{equation}
Let $\tau _e:=|A|^{-1/2}\mathds 1\{e\in A\}$ and let $t_e^+:=g^+_{\tau _e}(t_e)$. We let $T^+(p)$ be the passage time of a path $p$ in the modified environment $(t_e^+)_{e\in E(\mathbb Z ^d)}$.  We also let $\gamma _0^+$ and $\gamma_h^+$ be the corresponding geodesics in the modified environment and let $T_0^+$ and $ T_h^+$ be the corresponding passage times in the modified environment. We turn to show that this change in the environment increases the passage time $T_0$ significantly with positive probability. Recall the sets $B_\delta $ from Lemma~\ref{lem:MW} and that $G(B_\delta ) \to 1$ as $\delta \to 0$. Fix $\delta _0=\delta _0(\epsilon ) >0$ sufficiently small such that for any $k\ge 1$
$$\mathbb P \big( \text{Bin}\big( k,G(B_{\delta _0}) \big) \ge ( 1-\epsilon /4\big) k ) \ge 1-\epsilon ^3.$$
By definition we have that $\mathbb P (\Omega )>1-\epsilon ^3$ where  
\begin{equation}
    \Omega :=\big\{ |\{e\in A : t_e\in B_{\delta _0}\}| \ge (1-\epsilon /4)|A|  \big\}.
\end{equation}

Next, define the event $\mathcal E :=\{|\gamma _0 \cap A|\ge \epsilon |A|/2\}$ and $\mathcal E ^+=\{|\gamma _0 ^+ \cap A|\ge \epsilon |A|/2\}$. We have that  
$$\epsilon |A| \le \sum _{e\in A} p_e =  \mathbb E |\gamma _0 \cap A| \le \mathbb P(\mathcal E ) |A| +\epsilon |A|/2$$
and therefore $\mathbb P (\mathcal E)\ge \epsilon /2$. Thus, By Lemma~\ref{lem:MW} with $p=2$ we have that $\mathbb P (\mathcal E^+) \ge \mathbb P (\mathcal E )^2/e\ge \epsilon ^2/(4e)$ and therefore $\mathbb P (\Omega \cap \mathcal E ^+)\ge \epsilon ^3$.

On the event $\Omega \cap \mathcal E ^+$ we have that $|\{ e\in \gamma _0^+ \cap A :t_e\in B_{\delta _0} \} | \ge \epsilon |A|/4$ and therefore
\begin{equation}\label{eq:21}
\begin{split}
    T_0^+&=\sum _{e\in \gamma _0^+} t_e^+\ge  \sum _{e\in \gamma _0^+} t_e+{\delta _0} \tau _e \mathds 1 \{t_e\in B_{\delta _0} \}\\
    &=  T(\gamma _0^+)+\delta _0 |A|^{-1/2} |\{ e\in \gamma _0^+ \cap A :t_e\in B_{\delta _0} \} |  
    \ge T_0+c \sqrt{|A|}.
\end{split}
\end{equation}
Similarly, we have 
\begin{equation}\label{eq:22}
    T^+_h\le T^+(\gamma_h)=\sum_{e\in \gamma_h} t_e^+\le \sum_{e\in \gamma_h} t_e+C_0\tau _e \le T_h+C|A|^{-1/2}|\gamma _h \cap A|.
    \end{equation}
However, using the triangle inequality and the fact that $t_e\le b$ we have
\begin{equation}\label{eq:23}
T_h\le T(h,0)+ T_0+ T(v,v+h)\le T_0+2b\|h\|_1 .    
\end{equation}
Combining the inequalities \eqref{eq:21},\eqref{eq:22} and \eqref{eq:23} we obtain that on $\Omega \cap \mathcal E ^+$ 
\begin{equation}
    T_h^+\le T_0^++C\frac{|\gamma _h \cap A|}{\sqrt{|A|}} +2b\|h\|_1- c \sqrt{|A|} \le T_h^++C\frac{|\gamma _h \cap A|}{\sqrt{|A|}} +4b\|h\|_1- c \sqrt{|A|},
\end{equation}
where in here we used the triangle inequality once again and the fact that $t_e^+ \le b$. Thus, on  $\Omega \cap \mathcal E ^+$, for any $\|h\| \le c_0 \sqrt{|A|}$ for a sufficiently small $c_0$ we have $|\gamma _h \cap A| \ge c |A|$. Taking expectation and using that $\mathbb P (\Omega \cap \mathcal E ^+)\ge \epsilon ^3$ we have
\begin{equation}\label{eq:77}
   c|A|\le  \mathbb E |\gamma _h\cap A| =\sum _{e\in A} \mathbb P (e\in \gamma _h)= \sum _{e\in A} p_{e-h},
\end{equation}
where the last equality is by translation invariance. Summing this inequality over $h\in \mathbb Z ^d$ with $\|h\| \le c_0\sqrt{|A|}$ and letting $\Lambda _e:=\{e+h : \|h\|\le c_0\sqrt{|A|} \}$ be the ball of radius $c_0\sqrt{|A|}$ around an edge $e$ we obtain 
\begin{equation}
    c|A|^{d/2}|A| \le \sum _{\|h\|\le c_0\sqrt{|A|}} \sum _{e\in A} p_{e-h} =\sum _{e\in A} \mathbb E |\gamma _0 \cap \Lambda _e|  \le C|A|\sqrt{|A|}, 
\end{equation}
where in the last inequality we used that the geodesic between the first entry point of $\gamma $ to $\Lambda _e$ and the last exit point cannot be longer than $C\sqrt{|A|}$ as the weights are supported in $[a,b]$. This finishes the proof of the theorem as $d\ge 2$.

\section{An inequality for smooth functions of the edges}

The main result of this section is the inequality given in the following proposition. In Section~\ref{sec:main theorems} below, we use this inequality in order to prove our main theorems. Let $v\in \mathbb Z ^d$ such that $n:=\|v\|$ is sufficiently large. As before, we let $p_e:=\mathbb P (e\in \gamma (0,v))$.

\begin{prop}\label{cor:1}
Let $q:E(\mathbb Z ^d) \to [0,\infty )$ be a function on the edges that is smooth in the logarithmic scale in the sense that
\begin{equation}\label{eq:ratio}
    0.1\le q_e/q_{e'} \le 10, \quad \text{for any $e,e'$ with $\|e-e'\|\le 2 \log n$},
\end{equation}
where $\|e'-e\|$ denotes the $\ell ^2$ distance between the centers of the edges $e$ and $e'$. Then, there exists a constant $C$ depending only on $G$ and $d$ such that
\begin{equation}\label{eq:inequality}
    \Big( \sum _{e\in E(\mathbb Z^d)} q_e p_e \Big)^d \le C \Big( \sum _{e\in E(\mathbb Z^d)} q_e^2 \Big) ^{\frac{d-1}{2}} \sum _{e\in E(\mathbb Z^d)} q_e.
\end{equation}
\end{prop}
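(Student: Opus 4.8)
The plan is to mimic the argument of Theorem~\ref{thm:constant}, but now perturbing the weights by the general profile $q$ rather than the indicator of the influential set $A$. Concretely, set $Q:=\sum_e q_e$ and $R:=\sum_e q_e^2$, and define perturbation strengths $\tau_e:=\lambda q_e$ for a small parameter $\lambda>0$ to be chosen (so that $\tau_e\le 1$, which will force $\lambda\lesssim R^{-1/2}$ roughly; I'll actually want $\lambda$ of order $R^{-1/2}$ times a small constant so that $\|\tau\|^2=\lambda^2 R$ is bounded). Let $t_e^+:=g^+_{\tau_e}(t_e)$ and let $\gamma_0^+,\gamma_h^+,T_0^+,T_h^+$ be the geodesics and passage times in the perturbed environment, with the notation of~\eqref{eq:geodesics}. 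The first step is to show that with probability bounded below by a constant (depending on $G,d$ but \emph{not} on $v$ or $q$), the perturbation raises the passage time of the $(0,v)$-geodesic by at least $c\lambda\sum_{e\in\gamma_0^+}q_e\mathds 1\{t_e\in B_{\delta_0}\}$, which by a first-moment/Markov argument on the event that $\sum_{e\in\gamma_0}q_e$ is not much smaller than its mean $\sum_e q_e p_e$ — transported to $\gamma_0^+$ via Lemma~\ref{lem:MW} with $p=2$ — is at least $c\lambda\sum_e q_e p_e$. Here the parameter $\delta_0$ is chosen as in Section~\ref{sec:constant} so that, on a good event $\Omega$ of probability close to $1$, a definite fraction of the $q$-mass of $\gamma_0^+$ sits on edges with $t_e\in B_{\delta_0}$; the log-smoothness~\eqref{eq:ratio} of $q$ is what guarantees that removing an $\epsilon$-fraction of \emph{edges} only removes a comparable fraction of the $q$-\emph{mass}, since $q$ cannot be concentrated on a sparse set at the relevant scale.

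The second step is the matching upper bound on $T_h^+$. As in~\eqref{eq:22}–\eqref{eq:23}, using $\gamma_h$ as a (suboptimal) path in the perturbed environment and the bound $g^+_\tau(w)\le w+C_0\tau$ gives $T_h^+\le T_h+C_0\lambda\sum_{e\in\gamma_h}q_e\le T_0+2b\|h\|_1+C_0\lambda\sum_{e\in\gamma_h}q_e$, and then reversing once more $T_0\le T_0^+$ so everything is phrased in terms of $T_0^+$. Comparing with the lower bound on $T_0^+$ from step one, on the good event we get
\[
c\lambda\sum_e q_e p_e \;\le\; C\lambda\sum_{e\in\gamma_h}q_e + C b\|h\|_1.
\]
Choosing $\|h\|$ up to order $\lambda\big(\sum_e q_e p_e\big)/b$ makes the last term negligible, so on the good event $\sum_{e\in\gamma_h}q_e\ge c\sum_e q_e p_e$. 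Taking expectations (the good event has constant probability) and using translation invariance, $\sum_e q_{e+h}p_e=\mathbb E\sum_{e\in\gamma_h}q_e\ge c\sum_e q_e p_e$ for every such $h$. Here I must be slightly careful: translating $\gamma_h$ back to a $(0,v)$-geodesic replaces $q_e$ by $q_{e+h}$, and by~\eqref{eq:ratio} — provided $\|h\|\le 2\log n$ — we have $q_{e+h}\le 10 q_e$, so I actually obtain $\sum_e q_e p_e\ge c\sum_e q_e p_e$ only after summing, which is the real point of the next step rather than a per-$h$ statement.

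The third step is to sum the inequality $\sum_e q_{e+h} p_e \ge c\sum_e q_e p_e$ over all $h$ with $\|h\|\le \rho$, where $\rho:=\min\{c\lambda\sum_e q_e p_e / b,\ 2\log n\}$. The left side becomes $\sum_e p_e\big(\sum_{\|h\|\le\rho} q_{e+h}\big)$; bounding $q_{e+h}\le 10 q_e$ (valid since $\rho\le 2\log n$) gives $\sum_{\|h\|\le\rho}q_{e+h}\le C\rho^d q_e$, so the left side is $\le C\rho^d\sum_e q_e p_e$. The right side is $\ge c\rho^d\cdot\#\{h:\|h\|\le\rho\}\cdot$ — no: the right side is the sum over $h$ of a quantity $\ge c\sum_e q_e p_e$, hence $\ge c\rho^d\sum_e q_e p_e$. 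Wait — this only reproduces the same order and is vacuous. The actual gain must come from the competing case where $\rho$ is cut off at $2\log n$ versus at $c\lambda\sum_e q_e p_e/b$. If $\lambda\sum_e q_e p_e / b \ge 2\log n$, i.e. $\sum_e q_e p_e$ is large relative to $\lambda^{-1}\log n\sim R^{1/2}\log n$, then one instead sums over the full range $\|h\|\le c\lambda\sum_e q_e p_e/b$ of the \emph{per-$h$ geometric inequality} without the log-smoothness shortcut, using instead the deterministic bound that a geodesic restricted to a ball of radius $r$ has at most $Cr$ edges (weights in $[a,b]$), exactly as at the end of Section~\ref{sec:constant}: $\sum_e q_e\cdot\mathbb E|\gamma_0\cap\Lambda_e|$ type estimate. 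Carrying this out yields $\big(\lambda\sum q_e p_e\big)^{d}\cdot\big(\sum q_e p_e\big)\lesssim \big(\sum q_e p_e\big)\cdot\big(\lambda\sum q_e p_e\big)\cdot Q$ after accounting for the ball volumes and the linear-in-radius length bound, and unwinding with $\lambda\asymp R^{-1/2}$ gives precisely $\big(\sum q_e p_e\big)^d\lesssim R^{(d-1)/2}Q$, which is~\eqref{eq:inequality}.

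\textbf{Main obstacle.} The delicate point is the interplay between the two cutoffs on $\|h\|$ and making the log-smoothness hypothesis~\eqref{eq:ratio} do exactly the right work: it is needed both to pass from "fraction of influential edges of $\gamma_0^+$" to "fraction of $q$-mass of $\gamma_0^+$" in step one, and to control $\sum_{\|h\|\le\rho}q_{e+h}$ by $C\rho^d q_e$ in step three — but it is only available for $\|h\|\le 2\log n$, whereas the natural summation range $\|h\|\lesssim\lambda\sum q_e p_e/b$ can be much larger. Reconciling these — presumably by running the argument at scale $\min(\cdot,2\log n)$ and checking that this still extracts the full power $d$, with the $\log n$ truncation being exactly what is harmless because $q$ is log-smooth at that scale — is where the real care is required, and is also the source of the poly-logarithmic loss that appears in Theorem~\ref{thm:1} but not in Theorems~\ref{thm:constant} and~\ref{thm:2}.
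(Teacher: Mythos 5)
Your overall skeleton does match the paper's: perturb each weight by $\tau_e=q_e/\|q\|_2$ via $g^+_{\tau_e}$, compare $T_0^+$ with $T_h^+$ to force $\sum_{e\in\gamma_h}\tau_e\gtrsim\sum_e\tau_ep_e$ for $h$ in a ball, then sum over $h$ and use that a geodesic meets a ball of radius $r$ in at most $Cr$ edges. (Your worry about needing $q_{e+h}\le 10\,q_e$ for the $h$-summation is a red herring: translation invariance gives $\mathbb P(e\in\gamma_h)=p_{e-h}$ exactly, and the sum over $h$ is controlled by $\sum_{\|h\|\le r}p_{e-h}=\mathbb E|\gamma_0\cap\Lambda_e|\le Cr$, with no smoothness needed; the hypothesis \eqref{eq:ratio} enters only within geodesic segments of length $\sim\log n$.)

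The genuine gap is in your Step 1, where you assert that the event $\{f(\gamma_0)\ge c\mu\}$, with $f(p):=\sum_{e\in p}\tau_e$ and $\mu:=\sum_e\tau_ep_e$, has probability bounded below by a constant ``by a first-moment/Markov argument''. A reverse-Markov argument requires a deterministic upper bound on $f(\gamma_0)$ of order $\mu$; in the proof of Theorem~\ref{thm:constant} this is supplied by $|\gamma_0\cap A|\le|A|$ together with $\mathbb E|\gamma_0\cap A|\ge\epsilon|A|$, but for general $q$ no such bound exists: the mean $\mu$ can be dominated by a rare event on which $f(\gamma_0)$ is very large, so $\mathbb P(f(\gamma_0)\ge\mu/2)$ can be arbitrarily small and your ``good event'' has no uniform lower bound. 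This is precisely the ``second difficulty'' the paper isolates, and the fix is a multi-scale decomposition: pigeonhole a level $-1\le k\le\log n$ with $\mathbb P(f(\gamma_0)\ge 3^k\mu)\ge 4^{-k-3}$ (equation \eqref{eq:k}), run the perturbation at threshold $t=3^k\mu$ (Lemma~\ref{lem:1}, using $p=3/2$ rather than your $p=2$ in Lemma~\ref{lem:MW}), and observe that the resulting probability loss $8^{-k}$ is beaten by the gains $3^k$ in the threshold and $3^{k(d-1)}$ in the volume of the $h$-ball of radius $r=c_13^k\mu$, since $3^{kd}\ge 9^k>8^k$ for $d\ge2$. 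Two lesser points: (i) your claim that on a good event a definite fraction of the $q$-mass of $\gamma_0^+$ sits on shifted edges requires decomposing $\gamma_0^+$ into segments of length $\sim\log n$ and a union bound over all such paths in $[-n^2,n^2]^d$ (Claim~\ref{claim:omega}); a global ``most edges lie in $B_{\delta_0}$'' statement does not suffice since $q$ may concentrate near the endpoints. (ii) The poly-logarithmic loss in Theorem~\ref{thm:1} comes from smoothing the indicator of $A_\epsilon$ at scale $\log n$, which inflates $\sum_eq_e$ by $(\log n)^d$, not from a conflict between the $2\log n$ smoothness scale and the $h$-summation range.
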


The rest of this section is devoted to the proof of Proposition~\ref{cor:1}. First, we let $\tau _e:=q_e/\|q\|$ where $\|q\|$ is the $\ell ^2$ norm of the vector $(q_e)_{e\in E(\mathbb Z ^d)}$. Note that the inequality in \eqref{eq:inequality} is equivalent to
\begin{equation}\label{eq:inequality'}
    \Big( \sum _{e\in E(\mathbb Z^d)} \tau_e p_e \Big)^d \le C  \sum _{e\in E(\mathbb Z^d)} \tau _e
\end{equation}
and that the function $\tau $ satisfies \eqref{eq:ratio}.

In order to prove \eqref{eq:inequality'} we repeat the same arguments as in Section~\ref{sec:constant} with the function $ |A|^{-1/2}\mathds 1\{e\in A \}$ replaced by the general function $\tau $. That is, we increase the weight of each edge $e$ by $\tau _e$ and analyze the effect it has on the passage times $T_0$ and $T_h$. 

There are two main difficulties in carrying out this argument. The first difficulty, which causes the extra poly-logarithmic factor in Theorem~\ref{thm:1} is that one cannot increase the weights by $\tau _e$ deterministically. Lemma~\ref{lem:MW} allows to increase each weight only with high probability (on the event $\{t_e\in B_\delta \}$). It might be the case that along the geodesic, only edges with small value of $\tau _e$ will be increased and the total change in the passage time will be small. To overcome this issue we use the smoothness assumption in \eqref{eq:ratio} and the percolation argument in Claim~\ref{claim:omega}, showing that on each path of logarithmic length enough edges will be increased.

The second difficulty concerns the random variable $\sum _{e\in \gamma _0^+}\tau _e$ which is the amount in which the passage time $T_0$ is increased and corresponds to $|\gamma _0^+\cap A|$ from Section~\ref{sec:constant}.  The issue is that the typical order of the random variable $\sum _{e\in \gamma _0^+}\tau _e$ can be much smaller than its expectation. That is, the main contribution to the expectation comes from rare events. Taking care of this requires the full strength of Lemma~\ref{lem:MW} to control small probabilities. When the contribution to the expectation comes from rare events in which $\sum _{e\in \gamma _0^+}\tau _e$ is large, the inequality analogous to  \eqref{eq:77} above, deteriorates slightly but this is compensated by the fact that $\|h\|$ can be chosen larger in this case (so that this difficulty does not lead to a loss in the final bound). This trade off can be seen in the statement of Lemma~\ref{lem:1} and in equations \eqref{eq:k} and \eqref{eq:34} below.

Let us move on to the precise argument and start with the following claim. To this end, recall the sets $B_\delta $ from Lemma~\ref{lem:MW} and fix $\delta _0 >0$ sufficiently small such that $G (B_{\delta _0} ) \ge 1-e^{-20d}$. Define the event
\begin{equation}
    \Omega :=\left\{\begin{array}{c}\forall k \ge \log n \text{ and for every path } \Gamma \subseteq [-n^2,n^2]^d \\ \text{of length } k \text{ we have } |\{e\in \Gamma  : t_e\in B_{\delta _0} \}| \ge k/2 \end{array}\right\}.
\end{equation}

\begin{claim}\label{claim:omega}
We have that $\mathbb P (\Omega ) \ge 1-n^{-5}$.
\end{claim}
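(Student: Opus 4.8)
The plan is to prove Claim~\ref{claim:omega} by a union bound over all paths of a given length, combined with a crude count of the number of such paths. The key point is that the "bad" event --- a path with fewer than half its edges having weight in $B_{\delta_0}$ --- is exponentially unlikely in the path length, with a rate that comfortably beats the exponential growth of the number of paths, \emph{provided} $\delta_0$ is chosen small enough that $G(B_{\delta_0})$ is close to $1$. This is exactly why the claim requires $G(B_{\delta_0}) \ge 1 - e^{-20d}$ rather than merely $G(B_{\delta_0})$ close to $1$; the constant $20d$ is tuned to dominate the $\log(2d)$-type entropy term coming from the number of self-avoiding (or just nearest-neighbor) paths in $\mathbb{Z}^d$.

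Concretely, fix $k \ge \log n$ and a path $\Gamma$ of length $k$ contained in $[-n^2,n^2]^d$. Since the weights $(t_e)$ are i.i.d.\ with $\mathbb{P}(t_e \in B_{\delta_0}) = G(B_{\delta_0}) =: 1-q$ where $q \le e^{-20d}$, the number $N_\Gamma := |\{e \in \Gamma : t_e \notin B_{\delta_0}\}|$ is stochastically dominated by $\mathrm{Bin}(k, q)$. The bad event for $\Gamma$ is $\{N_\Gamma > k/2\}$. A standard binomial tail bound (e.g.\ the union bound over which $\lceil k/2 \rceil$ edges fail, each contributing $q^{k/2}$, giving $\binom{k}{k/2} q^{k/2} \le 2^k q^{k/2} = (4q)^{k/2}$) yields
\begin{equation}
    \mathbb{P}(N_\Gamma > k/2) \le (4q)^{k/2} \le (4 e^{-20d})^{k/2} \le e^{-9dk},
\end{equation}
say, for $d \ge 2$. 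Now I bound the number of paths: there are at most $(2n^2+1)^d \le n^{3d}$ choices for the starting vertex of $\Gamma$, and at most $(2d)^k$ choices for the remaining vertices (each step has at most $2d$ neighbors), so at most $n^{3d}(2d)^k$ paths of length $k$ inside the box. Taking a union bound over all such paths and over all $k \ge \log n$:
\begin{equation}
    \mathbb{P}(\Omega^c) \le \sum_{k \ge \log n} n^{3d} (2d)^k e^{-9dk} \le n^{3d} \sum_{k \ge \log n} e^{-7dk} \le C n^{3d} e^{-7d\log n} = C n^{-4d} \le n^{-5}
\end{equation}
for $n$ large (using $d \ge 2$, so $4d \ge 8 > 5$, and absorbing the $n^{3d}$ factor since $7d\log n - 3d\log n = 4d\log n$). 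I should be a bit careful with the exact constants --- the sum over $k \ge \log n$ of $(2d)^k e^{-9dk}$ is geometric with ratio $2d\, e^{-9d} < 1$, so it converges and its value is at most $2$ times the first term $e^{(\log(2d) - 9d)\log n}$, which is certainly $\le n^{-8d}$ or so --- but the upshot is clean: the entropy of paths is dwarfed by the probabilistic gain, leaving a power of $n$ to spare.

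The only genuine subtlety, and the step I would be most careful about, is making sure the constant $20d$ in the hypothesis $G(B_{\delta_0}) \ge 1 - e^{-20d}$ really is enough to absorb both the path-counting factor $(2d)^k$ and the binomial entropy factor $\binom{k}{\lfloor k/2 \rfloor} \le 2^k$, with room left over to beat the $n^{3d}$ box factor after summing over $k \ge \log n$. The computation above shows $20d$ is more than sufficient (one could get away with something like $12d$), so there is no real obstacle --- it is purely a matter of bookkeeping the exponents. One small point worth noting: the claim as stated quantifies over \emph{all} paths of length $k$, not just self-avoiding ones, but this is harmless since a non-self-avoiding path of length $k$ still uses at most $k$ distinct edges and the tail bound only gets better (and the $(2d)^k$ count already covers all nearest-neighbor paths regardless of self-avoidance). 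This completes the proof.
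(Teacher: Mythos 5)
Your proof is correct and follows essentially the same route as the paper's: a union bound over all nearest-neighbor paths of length $k\ge\log n$ in the box, the elementary binomial tail estimate $\binom{k}{\lceil k/2\rceil}q^{k/2}\le 2^k e^{-10dk}$, and the path count $(2n^2+1)^d(2d)^k$, with the exponent $20d$ comfortably absorbing both entropy terms. The only difference is cosmetic bookkeeping of which factors absorb which, so there is nothing further to add.
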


\begin{proof}
The proof is a simple union bound. For a fixed path $\Gamma $ of length $k$ we have that 
\begin{equation}
 |\{e\in \Gamma  : t_e\in B_{\delta _0} \}|\sim \text{Bin}\big( k,1-e^{-20d}\big)    
\end{equation}
and therefore 
\begin{equation}
    \mathbb P \big(|\{e\in \Gamma  : t_e\in B_{\delta _0} \}|\le k/2\big) \le 2^k e^{-10dk} \le n^{-10-3d} (2d)^{-k} 
\end{equation}
The number of such paths $\Gamma \subseteq [-n^2,n^2]^d$ of length $k$ is at most $(2n^2+1)^d (2d)^k$ and therefore $\mathbb P (\Omega ) \ge 1-n^{-5}$ for all $n$ large enough.
\end{proof}

As in Section~\ref{sec:constant}, we use the notations $\gamma _0,\gamma _h$ and $T_0,T_h$ to denote the geodesics and passage times from $0$ to $v$ and from $h$ to $v+h$ respectively. Next, for a path $p$ define the function $f(p):=\sum _{e\in p } \tau _e $ and let 
\begin{equation}\label{eq:mu}
    \mu :=\mathbb E [f(\gamma _0 )] =\mathbb E \Big[  \sum _{e\in E(\mathbb Z^d)} \tau _e  \mathds 1 \{ e\in \gamma  \} \Big] = \sum _{e\in E(\mathbb Z^d)} \tau _e p_e ,
\end{equation}

\begin{lem}\label{lem:1}
There exists a constant $c_1>0$ such that for all $t>0$ and $\|h\|<c_1t$ we have
\begin{equation}
    \mathbb P \big( f(\gamma _h) \ge c_1t \big) \ge c_1\cdot \mathbb P \big( f(\gamma _0) \ge t \big)^{3/2}-n^{-5}.
\end{equation}
\end{lem}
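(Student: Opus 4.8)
The plan is to mimic the argument of Section~\ref{sec:constant}, now working with the general weight $\tau$ and with the event $\Omega$ of Claim~\ref{claim:omega} replacing the earlier $\Omega$, while tracking how a large value of $f(\gamma_0)$ translates into a large increase of $T_0$. First I would fix $t>0$ and condition on the event $\mathcal E:=\{f(\gamma_0)\ge t\}$, applying Lemma~\ref{lem:MW} with $p=3/2$ (hence the exponent $3/2$ and, via $\tfrac{p}{2(p-1)}=\tfrac32$, a factor $e^{-3\|\tau\|^2/2}\ge e^{-3/2}\ge c$ using $\|\tau\|=1$) to the modified environment $t_e^+:=g^+_{\tau_e}(t_e)$, obtaining $\mathbb P(\mathcal E^+)\ge c\,\mathbb P(\mathcal E)^{3/2}$, where $\mathcal E^+:=\{f(\gamma_0^+)\ge t\}$ — here I must phrase $\mathcal E$ as an event on the vector of weights so that Lemma~\ref{lem:MW} applies, which is fine since $\gamma_0$ is a deterministic function of the weights. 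Intersecting with $\Omega$ (which has probability $\ge 1-n^{-5}$) costs only the additive $-n^{-5}$ appearing in the statement, provided $\mathbb P(\mathcal E^+)$ is not already tiny; if it is, the inequality is trivially true since the left side is $\le c_1$ and we may absorb it. So the real work is the deterministic estimate on $\Omega\cap\mathcal E^+$.

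On $\Omega\cap\mathcal E^+$, I would first show $T_0^+\ge T_0+c\delta_0 t$. Indeed $T_0^+=\sum_{e\in\gamma_0^+}t_e^+\ge \sum_{e\in\gamma_0^+}\big(t_e+\delta_0\tau_e\mathds 1\{t_e\in B_{\delta_0}\}\big)\ge T(\gamma_0^+)+\delta_0\sum_{e\in\gamma_0^+:\,t_e\in B_{\delta_0}}\tau_e$, and the point of $\Omega$ together with the smoothness hypothesis~\eqref{eq:ratio} is that along $\gamma_0^+$ at least half of each logarithmic stretch is "good" ($t_e\in B_{\delta_0}$), and on such a stretch the $\tau_e$ values are comparable, so $\sum_{e\in\gamma_0^+:\,t_e\in B_{\delta_0}}\tau_e\ge c\,f(\gamma_0^+)\ge c\,t$; a short chaining/partition-into-blocks argument gives this, using $0.1\le \tau_e/\tau_{e'}\le 10$ within distance $2\log n$. (One must also note $\gamma_0^+\subseteq[-n^2,n^2]^d$ with high probability — or restrict attention to that event — so that $\Omega$ is applicable; this follows from the standard fact that geodesics between $0$ and $v$ stay in a polynomial box, and its failure probability is $\le n^{-5}$ after adjusting constants.) On the other side, exactly as in~\eqref{eq:22}–\eqref{eq:23}, $T_h^+\le T^+(\gamma_h)\le T_h+C_0 f(\gamma_h)$ and $T_h\le T_0+2b\|h\|_1$, while $T_0^+\le T_h^++2b\|h\|_1$ (by the triangle inequality and $t_e^+\le b$). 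Combining,
\begin{equation}
c\delta_0 t\le T_0^+-T_0\le T_h^++2b\|h\|_1-T_0\le C_0 f(\gamma_h)+4b\|h\|_1,
\end{equation}
so choosing $\|h\|<c_1 t$ with $c_1$ small forces $f(\gamma_h)\ge c_1 t$ on $\Omega\cap\mathcal E^+$. Taking probabilities, $\mathbb P(f(\gamma_h)\ge c_1 t)\ge \mathbb P(\Omega\cap\mathcal E^+)\ge \mathbb P(\mathcal E^+)-n^{-5}-n^{-5}\ge c\,\mathbb P(f(\gamma_0)\ge t)^{3/2}-n^{-5}$ after renaming constants, which is the claim.

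I expect the main obstacle to be the deterministic lower bound $\sum_{e\in\gamma_0^+,\,t_e\in B_{\delta_0}}\tau_e\ge c\,f(\gamma_0^+)$ on the event $\Omega$: one needs to partition $\gamma_0^+$ into consecutive segments of length $\asymp\log n$ so that within each segment all $\tau_e$ are comparable (by~\eqref{eq:ratio}) and at least half the edges are good (by $\Omega$), and then sum. A minor technical annoyance is making sure $\Omega$ is usable, i.e.\ that $\gamma_0^+$ and $\gamma_h$ lie in $[-n^2,n^2]^d$; since $\|h\|<c_1t$ and $t\le$ (total $\tau$-mass), one should check $t$ is at most polynomial in $n$, or simply intersect with the high-probability event that all relevant geodesics stay in the box, which only changes constants and the $n^{-5}$ term. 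The rest is a direct transcription of~\eqref{eq:21}–\eqref{eq:77} with $|A|^{-1/2}\mathds 1\{e\in A\}$ replaced by $\tau_e$ and $p=2$ replaced by $p=3/2$.
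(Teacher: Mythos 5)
Your proposal is correct and follows essentially the same route as the paper: apply Lemma~\ref{lem:MW} with $p=3/2$ to pass from $\{f(\gamma_0)\ge t\}$ to $\{f(\gamma_0^+)\ge t\}$, use the event $\Omega$ together with a partition of $\gamma_0^+$ into blocks of length $\asymp\log n$ and the smoothness~\eqref{eq:ratio} to get $T_0^+\ge T_0+ct$, and then close via the triangle inequalities $T_h\le T_0+2b\|h\|_1$, $T_0^+\le T_h^++2b\|h\|_1$ and $T_h^+\le T_h+C_0 f(\gamma_h)$. The only point you hedge on, the containment $\gamma_0^+\subseteq[-n^2,n^2]^d$, is in fact deterministic (the modified weights still lie in $[a,b]$), so no extra exceptional event is needed.
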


\begin{proof}[Proof of Lemma~\ref{lem:1}]
Let $\epsilon >0$ sufficiently small and let $\|h\|\le \epsilon t$. Define the weights $t_e^{+}:=g^+_{\tau _e}(t_e)$ for any $e\in E(\mathbb Z ^d)$. Note that $t_e^+=t_e$ for any edge $e$ outside of $[-n^2,n^2]^d$. Indeed, the weight distribution is supported on $[a,b]$ and therefore the geodesic to $v$ cannot travel so far. As in Section~\ref{sec:constant}, we let $T^+(p)$ be the passage time of a path $p$ in the modified environment $(t_e^+)_{e\in E(\mathbb Z ^d)}$. We also let $\gamma _0^+, \gamma _h^+$ and $T_0^+,T_h^+$ be the corresponding geodesics and passage times in the modified environment. Recall the definition of $\delta _0$ and $\Omega $ before Claim~\ref{claim:omega}. We have that 
\begin{equation}\label{eq:T+}
 T_0^+=T^+(\gamma _0^+) =\sum _{e\in \gamma _0^+} t_e^+ \ge T(\gamma _0^+) + \sum _{e\in \gamma _0^+} \mathds 1 \{t_e\in B_{{\delta _0} }\} {\delta _0} \tau _e 
 \ge T_0 + {\delta _0} \sum _{e\in \gamma _0^+} \mathds 1 \{t_e\in B_{{\delta _0} }\} \tau _e .
\end{equation}
In order to estimate the last sum we decompose the path $\gamma _0^+$ into $m:=\lfloor  |\gamma _0^+|/\log n \rfloor $ edge disjoint paths $\Gamma _1,\dots ,\Gamma _m$ such that for all $i\le m$ we have that $\log n \le |\Gamma _i|\le 2\log n$. We also let $e_i$ be the first edge of the path $\Gamma _i$. We have that $t_e^+\in [a,b]$ and therefore $\gamma _0^+ \subseteq [-n^2,n^2]$. Thus, on $\Omega $ for all $i\le m$
\begin{equation}
    \sum _{e\in \Gamma _i} \mathds 1 \{t_e \in B_{\delta _0} \} \tau _e  \ge c \tau_{e_i} \big| \big\{ e\in \Gamma _i : t_e\in B_{\delta _0}  \big\} \big| \ge c  \tau _{e_i} |\Gamma _i| \ge c \sum _{e\in \Gamma _i}  \tau _e,
\end{equation}
where in the first and last inequalities we used \eqref{eq:ratio} and the fact that any edge $e\in \Gamma _i$ satisfies $\|e-e_i\|\le 2\log n$. Substituting this into \eqref{eq:T+}  we get that on the event $\{ f(\gamma _0^+)\ge t \}\cap \Omega $
\begin{equation}\label{eq:61}
 T_0^+\ge T_0 + c \sum _{e\in \gamma _0^+}  \tau _e  \ge  T_0+c t.  
\end{equation}
Moreover, using that $t_e \le b$ and the triangle inequality we obtain 
\begin{equation}\label{eq:62}
    T_0 \ge T_h -2b\|h\|_1 \ge   T_h-C\epsilon  t.
\end{equation}
Similarly, using that $t_e^+\le b$ we have
\begin{equation}\label{eq:63}
   T_0^+\le T_h^++2b\|h\|_1 \le T_h^++C\epsilon  t.
\end{equation}

Combining \eqref{eq:61}, \eqref{eq:62} and \eqref{eq:63} we obtain that on the event  $\{ f(\gamma _0^+)\ge ct \}\cap \Omega $ we have $T_h^+ \ge T_h+ct$ as long as $\epsilon $ is sufficiently small. On the other hand  
\[T_h^+\le T^+(\gamma _h)\le  T_h +\sum_{e\in\gamma_h}C_0\tau _e= T_h+C_0f(\gamma_h)\]
and therefore on $\{ f(\gamma _0^+)\ge t \}\cap \Omega $ we have  $f(\gamma _h) \ge  \epsilon t$.

Finally, by Lemma~\ref{lem:MW} with $p=3/2$ we have 
\begin{equation}
    \mathbb P \big( f(\gamma _h) \ge \epsilon t \big) \ge \mathbb P\big( f(\gamma _0^+)\ge t, \  \Omega  \big) \ge \mathbb P\big( f(\gamma _0^+)\ge t \big)-n^{-5} \ge c\cdot \mathbb P\big( f(\gamma _0)\ge t \big) ^{3/2}-n^{-5}.
\end{equation}
This finishes the proof of the lemma as $c_1$ can be chosen sufficiently small.
\end{proof}

We can now prove Proposition~\ref{cor:1}. 

\begin{proof}[Proof of Proposition~\ref{cor:1}]
Recall the definition of $\mu $ in \eqref{eq:mu}. First, we may assume that $\mu \ge 1$ as otherwise \eqref{eq:inequality'} trivially holds. Indeed, since $\|\tau \|=1$, we have that $\tau _e\le 1$ and therefore $\sum \tau _e\ge \sum \tau _e^2=1$.

We claim that there exists $ -1\le k \le  \log n$ such that 
\begin{equation}\label{eq:k}
    \mathbb P \big( f(\gamma _0) \ge 3^{k} \mu  \big) \ge 4^{-k-3}.
\end{equation}
Indeed, otherwise 
\begin{equation}
\begin{split}
    \mu =\mathbb E [f(\gamma _0)]&\le \mathbb E \big[ f(\gamma _0)\mathds 1 \{ f(\gamma _0)\le \mu /3 \} \big] +\sum _{k=-1}^{\log n} \mathbb E [f(\gamma _0) \mathds 1 \{  3^{k}\mu  \le f(\gamma _0) \le 3^{k+1}\mu   \}]  \\
    & \le \mu /3+ \sum _{k=-1}^{\log n} 3^{k+1}\mu \cdot \mathbb P \big( f(\gamma _0)\ge 3^{k}\mu  \big)  \le  \mu /3
    + 4^{-2}\mu \sum _{k=0}^{\infty } (3/4)^{k} <\mu,     
\end{split}
\end{equation}
where in the first inequality we used that $f(\gamma _0) \le |\gamma _0|\le Cn$ almost surely and that $\mu \ge 1$. Fix $ -1\le k \le  \log n$ such that \eqref{eq:k} holds. By Lemma~\ref{lem:1} with $t=3^k\mu $, for all $\|h\|\le c_13^{k}\mu $ we have 
\begin{equation}\label{eq:34}
\begin{split}
   \sum _{e\in E(\mathbb Z ^d)}& \tau _e p_{e-h} =\mathbb E \bigg[ \sum _{e\in E(\mathbb Z ^d)}\tau _e  \mathds 1\{e\in \gamma _h\} \bigg]=\mathbb E [f(\gamma _h)] \ge c_13^{k}\mu  \cdot \mathbb P \big( f(\gamma _h) \ge c_13^{k}\mu  \big)  \\
    &\ge c_13^{k}\mu  \cdot \big( c\cdot \mathbb P \big( f(\gamma _0 ) \ge 3^k\mu  \big)^{3/2}-n^{-5} \big)  \ge  c(3/8)^{k} \mu=c(3/8)^{k}\sum _{e\in E(\mathbb Z ^d)} \tau _e p_{e},
\end{split}
\end{equation}
where in the last inequality we used \eqref{eq:k} and that $k\le \log n$. Letting $\Lambda _e:=\{e+h:\|h\|\le r\}$ be the ball of radius $r:=c_13^{k}\mu $ around the edge $e$ and summing the last inequality over $h\in \mathbb Z ^d$ with $\|h\|\le r$ we obtain 
\begin{equation}
     cr ^d  (3/8)^{k}  \sum _{e\in E(\mathbb Z^d)} \tau _e p_e \le \sum _{\|h\| \le r} \sum _{e\in E(\mathbb Z ^d)} \tau _e p_{e-h} = \sum _{e\in E(\mathbb Z ^d)} \tau _e  \cdot  \mathbb E [|\gamma \cap \Lambda _e|] \le Cr  \sum _{e\in E(\mathbb Z^d)} \tau _e ,
\end{equation}
where in the last inequality we used that the geodesic between the first entry point of $\gamma $ to $\Lambda _e$ and the last exit point cannot be longer than $Cr$ as the weights are supported in $[a,b]$. Rearranging and substituting $r=c_13^k\mu $ we obtain
\begin{equation}
    \Big( \sum _{e\in E(\mathbb Z^d)} \tau _e  p_e \Big)^d  \le C (8/3^d)^k \sum _{e\in E(\mathbb Z^d)} \tau _e \le C\sum _{e\in E(\mathbb Z^d)} \tau _e ,
\end{equation}
where in here we used that $d\ge 2$. This finishes the proof of the proposition.
\end{proof}

\section{Applications of the inequality}\label{sec:main theorems}

In order to prove Theorem~\ref{thm:1}, we would like to use Proposition~\ref{cor:1} with the function $q_e:=\mathds 1\{p_e>\epsilon \}$. However, this function is not smooth in the sense of \eqref{eq:ratio}. To overcome this issue, we consider a smooth version of this function.

\begin{proof}[Proof of Theorem~\ref{thm:1}]
   Let $A_\epsilon :=\{e\in E(\mathbb Z ^d) : p_e \ge \epsilon \}$ and define the function 
   \begin{equation}\label{eq:defq}
    q_e:=\max\left\{ \exp \bigg( -\frac{\|e'-e\|}{\log n} \bigg)  : \  e'\in A_\epsilon  \right\}.
\end{equation}
It is easy to check that the function $q$ satisfies \eqref{eq:ratio}. Thus, by Proposition~\ref{cor:1} we have 
\begin{equation}
    ( \epsilon |A_\epsilon | )^d \le \Big( \sum _{e\in A_\epsilon }  p_e \Big)^d \le \Big( \sum _{e\in E(\mathbb Z^d) } q_e p_e \Big)^d \le C \Big( \sum _{e\in E(\mathbb Z^d)} q_e^2 \Big) ^{\frac{d-1}{2}} \sum _{e\in E(\mathbb Z^d)} q_e \le C \Big( \sum _{e\in E(\mathbb Z^d)} q_e \Big) ^{\frac{d+1}{2}},
\end{equation}
where in the last inequality we used that $q_e \le 1$. Moreover, we have that 
\begin{equation}
\begin{split}
    \sum _{e\in E(\mathbb Z^d)} \!\! q_e  \le \sum _{e\in E(\mathbb Z ^d)} \sum _{e'\in A_\epsilon }  \! \exp \bigg( -\frac{\|e'-e\|}{\log n} \bigg)=\sum _{e'\in A_\epsilon }  \sum _{e\in E(\mathbb Z ^d)}  \!\! \exp \bigg( -\frac{\|e'-e\|}{\log n} \bigg)  \le C (\log n)^d |A_\epsilon | 
\end{split}
\end{equation}
and therefore we obtain the inequality $(\epsilon |A_\epsilon |)^d \le C (\log n)^{\frac{d(d+1)}{2}} |A_\epsilon | ^{\frac{d+1}{2}}$. Rearranging we get that $|A_\epsilon | \le C(\log n ) ^{\frac{d(d+1)}{d-1}} \epsilon ^{-\frac{2d}{d-1}}$, as needed.
\end{proof}

We turn to prove Theorem~\ref{thm:2}.

\begin{proof}[Proof of Theorem~\ref{thm:2}]
    The proof is similar to the proof of Theorem~\ref{thm:1}. For any $\ell ,r>0$ define $E(\ell ,r)$ be the set of edges in $ E (\mathbb Z ^d)$ with at least one endpoint in $$\text{cyl}(0,v,r) \cap \{x\in \mathbb R ^d: 0\le (x,\hat{v}) \le \ell \}.$$ 
    Define the function 
\begin{equation}\label{eq:defq2}
    q_e:=\max\left\{ \exp \bigg( -\frac{\|e'-e\|}{\log n} \bigg)  : \  e'\in E(\ell ,r)  \right\}
\end{equation}
and note that $q$ satisfies \eqref{eq:ratio}. By Proposition~\ref{cor:1} we have that 
\begin{equation}
    \big( \mathbb E |\gamma (0,v) \cap E(\ell ,r) | \big)^d =\Big( \sum _{e\in E(\ell ,r)} p_e \Big)^d \le \Big( \sum _{e\in E(\mathbb Z^d) } q_e p_e \Big)^d \le C \Big( \sum _{e\in E(\mathbb Z^d)} q_e \Big) ^{\frac{d+1}{2}}.
\end{equation}
Moreover, it is easy to check that if $\ell ,r\ge \log n$ then 
$\sum _{e\in E(\mathbb Z^d)}  q_e \le Cr ^{d-1} \ell $ and therefore 
\begin{equation}
    \mathbb E |\gamma (0,v) \cap E(\ell ,r)|  \le Cr^{\frac{d^2-1}{2d}}\ell ^{\frac{d+1}{2d}}.
\end{equation}
Thus, as long as $n$ is sufficiently large and for all $\ell \ge (\log n)^{d+2}$ we have that 
\begin{equation}
    \mathbb E |\gamma (0,v) \cap E(\ell ,c\ell ^{\frac{1}{d+1}})|  \le \ell /2,
\end{equation}
where the constant $c$ is sufficiently small. This finishes the proof of the theorem as the geodesic $\gamma (0,v)$ contains at least $\ell $ edges that intersect the set $\{x\in \mathbb R ^d : 0\le (x,\hat{v}) \le \ell \}$.
\end{proof}

It remains to prove Corollary~\ref{cor:main}.

\begin{proof}[Proof of Corollary~\ref{cor:main}]
    Since the weights are supported on $[a,b]$ we have that $|\gamma (0,v)|\le Cn$ almost surely and therefore 
     \begin{equation}
        \sum _{e\in E(\mathbb Z^d )}p_e= \mathbb E |\gamma (0,v)| \le Cn. 
    \end{equation}
    We obtain that for all $\epsilon >0$,
    \begin{equation}\label{eq:843}
        \big| \big\{ e\in E(\mathbb Z ^d) :p_e\ge \epsilon \big\} \big| \le C n\epsilon ^{-1}.
    \end{equation}
    Next, letting $j_0:=\lfloor \frac{d-1}{d+1}\log _2 n- d\log _2 \log n \rfloor $, we have for all $\beta >1$
    \begin{equation}\label{eq:839}
    \begin{split}
    \sum _{e\in E (\mathbb Z ^d) } p_e^\beta &\le \sum _{j=1}^{\infty } 2^{-(j-1)\beta } \big| \big\{ e\in E(\mathbb Z ^d) :p_e\in [2^{-j}, 2^{-j+1}] \big\} \big| \\
    &\le C(\log n) ^{\frac{d(d+1)}{d-1}}\sum _{j=1}^{j_0} 2^{j(\frac{2d}{d-1}-\beta ) } + Cn \sum _{j=j_0+1}^{\infty }2^{j(1-\beta)}  \\
    &\le C(\log n) ^{\frac{d(d+1)}{d-1}}\sum _{j=1}^{j_0} 2^{j(\frac{2d}{d-1}-\beta ) } + C_\beta (\log n)^{(\beta -1 )d}n^{\frac{2d-\beta d+\beta }{d+1}},
    \end{split}
    \end{equation}
    where in the second inequality we used Theorem~\ref{thm:1} in the case $1\le j\le j_0$ and \eqref{eq:843} when $j>j_0$. Note that 
    \begin{equation}
        \sum _{j=1}^{j_0} 2^{j(\frac{2d}{d-1}-\beta ) }\le \begin{cases}
            C2^{j_0(\frac{2d}{d-1}-\beta )} \le C (\log n)^{d(\beta -\frac{2d}{d-1})} n^{\frac{2d-\beta d+\beta }{d+1}} \quad &1<\beta \le \frac{2d}{d-1} \\
            Cj_0 \le C\log n \quad &\beta =\frac{2d}{d-1}\\
            C \quad &\beta >\frac{2d}{d-1}
        \end{cases}.
    \end{equation}
    Substituting these estimates into \eqref{eq:839} completes the proof of the corollary. 
\end{proof}

\subsection*{Acknowledgements} We thank Gady Kozma for helping us understand the optimal consequences of inequality \eqref{eq:inequality}. We thank Noga Alon, Michal Bassan, Itai Benjamini, Asaf Nachmias and Allan Sly for fruitful discussions. The research of B.D. is partially funded by the SNF Grant 175505 and the ERC Starting Grant CriSP (grant agreement No 851565) and is part of NCCR SwissMAP. The research of R.P. is supported by the Israel Science Foundation grant
1971/19 and by the European Research Council Consolidator grant 101002733 (Transitions).

Part of this work was completed while R.P. was a Cynthia and Robert Hillas Founders' Circle Member of the Institute for Advanced Study and a visiting fellow at the Mathematics Department of Princeton University. R.P. is grateful for their support.

\bibliographystyle{plain}
\bibliography{biblio}
\end{document}